\def\line#1{\hbox to \hsize{#1\hfill}}
\begin{document}
\theoremstyle{plain}
\newtheorem{Thm}{Theorem}
\newtheorem{Cor}{Corollary}
\newtheorem{Con}{Conjecture}
\newtheorem{Main}{Main Theorem}
\newtheorem{Lem}{Lemma}
\newtheorem{Prop}{Proposition}
\def\R{\mathbb{R}}
\def\F{\mathbb{F}}
\def\Re{{\frak R\frak e}}
\def\Im{{\frak I\frak m}}
\def\S{\mathbb{S}}
\def\H{\mathbb{H}}
\def\L{\mathbb{L}}
\theoremstyle{definition}
\newtheorem{Def}{Definition}
\newtheorem{Note}{Note}

\newtheorem{example}{\indent\sc Example}

\theoremstyle{remark}
\newtheorem{notation}{Notation}
\renewcommand{\thenotation}{}

\errorcontextlines=0
\numberwithin{equation}{section}
\renewcommand{\rm}{\normalshape}%

\title[]%
   {Minimal surfaces in the product of two dimensional real space forms endowed with a neutral metric}
\author{ Martha P. Dussan, Nikos Georgiou, Martin Magid}
\address{}
\email{}

\keywords{}
\subjclass{Primary 53C42; Secondary 53C50}
\date{12 March 2016}

\address{Martha P. Dussan\\
     Universidade de S\~ao Paulo\\
     Departamento de Matem\'atica -IME\\
    CEP: 05508-090\\
     S\~ao Paulo, Brazil}


\address{Nikos Georgiou\\
 Department of Computing and Mathematics \\
 Waterford Institute of Technology \\
Waterford \\
Ireland.}


\address{Martin Magid\\
 Department of Mathematics \\
Wellesley College \\
Wellesley, MA 02481\\
USA.}

\maketitle

\let\thefootnote\relax\footnote{}
\begin{abstract}We investigate minimal surfaces in products of two-spheres $\S^2_p\times\S^2_p$, with the neutral metric given by $(g,-g)$.  Here $\S^2_p\subset \R^{p,3-p}$ , and $g$ is the induced metric on the sphere.  We compute all totally geodesic surfaces and we give a relation between minimal surfaces and the solutions of the Gordon equations. Finally, in some cases we give a topological classification of compact minimal surfaces.
\end{abstract}
\section*{Introduction}   There has been an explosion of recent papers which consider products of space of constant curvature as ambient spaces.  In particular Harold Rosenberg and his co-authors have looked at constant mean curvature submanifolds in products for a long time, beginning with \cite{R}. The surface theory in products of two-spheres or the product of hyperbolic planes has been of great interest and has been studied extensively in the  two articles \cite{CU} and \cite{urbano2}.

In this article, our ambient spaces are the products $\S^2\times \S^2$, $\H^2\times \H^2$ and $d\S^2\times d\S^2$. Each such four manifold is the space $\L^+(M^3)$ (resp. $\L^-(M^3)$) of spacelike (resp. timelike) oriented geodesics in a certain 3-manifold $(M^3,g)$. In particular, $\L^+(\S^3)=\S^2\times \S^2$, $\L^+(Ad\S^3)=\H^2\times \H^2$ and $\L^-(Ad\S^3)=d\S^2\times d\S^2$ and we simply write $\L^{\pm}(M^3)$ to denote these three manifolds. It is well known, that $\L^{\pm}(M^3)$ enjoys two natural K\"ahler or para-K\"ahler structures with the same symplectic structure $\Omega$ such that the one metric (denoted by $G$) is of neutral signature, locally conformally flat and scalar flat, while the other (denoted by $\bar G$) is Einstein (see \cite{AGK} and \cite{An4}). Furthermore, $G$ and $\bar G$ are invariant under the group action of the isometry group $g$. 

There is interesting relation between the surface theory of the space $\L^{\pm}(M^3)$ of oriented geodesics with the surface theory of $M^3$. In particular, the set of oriented geodesics $\L(S)$ that are orthogonal to a surface $S$ in $M^3$ form a Lagrangian surface in $\L^{\pm}(M^3)$ with respect to the symplectic structure $\Omega$. For example, $L(S)$ is flat with respect to the neutral metric $G$, if and only if, $S$ is Weingarten, that is, its principal curvatures are functionally related (see \cite{An4}, \cite{GG} and \cite{GK}). Furthermore, $G$-minimal Lagrangian surfaces in $\L^{\pm}(M^3)$ are orthogonal to equidistant tubes along a geodesic in $M^3$. The study of minimal Lagrangian surfaces in $\L^{\pm}(M^3)$ with respect to the Einstein metric $G'$ have been studied by many authors (see \cite{CU}, \cite{palmer} and \cite{urbano2}).

Torralbo and Urbano in \cite{torurb}, have studied $G'$-minimal surfaces in $\L(\S^3)=\S^2 \times \S^2$ which, inspired us to investigate $G$-minimal surfaces in $\L^{\pm}(M^3)=\S^2\times \S^2, \H^2\times \H^2$ and $d\S^2\times d\S^2$. This leads to new, interesting examples of minimal surfaces in $\S^2\times \S^2$ and $d\S^2\times d\S^2$, as well as a classification of the totally geodesic surfaces. Our debt to them is clear throughout this paper.

\vspace{0.1in}

\section{Surfaces in ${\mathbb S}_{p,1}^2\times {\mathbb S}_{p,1}^2$ endowed with the neutral metric}\label{s:construction}

Consider the real space ${\mathbb R}^3$, endowed with the pseudo-Riemannian metric 
\[
\left<.,.\right>_p=-\sum_{i=1}^p dx_i^2+\sum_{i=p+1}^3 dx_i^2,
\]
of signature $(p,3-p)$, with $p=1,2$. For $p=0$, define $\left<.,.\right>_0=dx_1^2+dx_2^2+dx_3^2$.

For $\delta\in \{-1,1\}$, we define the 2-dimensional complete quadric ${\mathbb S}_{p,\delta}^2:=\{x\in {\mathbb R}^3|\; \left<x,x\right>_p=\delta\}$, endowed with the induced metric $g$ obtained by the canonical inclusion ${\mathbb S}_{p,\delta}^2\hookrightarrow ({\mathbb R}^3,\left<.,.\right>_p)$. The metric $g$ has signature $(p,2-p)$ if $\delta=1$ and  $(p-1,3-p)$ if $\delta=-1$. Furthermore $g$ is of constant Gauss curvature $K^g=\delta$.

Note that the 2-sphere ${\mathbb S}^2={\mathbb S}_{0,1}^2$ and the hyperbolic plane ${\mathbb H}^2$, which is anti-isometric to ${\mathbb S}_{2,1}^2\cap \{x\in {\mathbb R}^3|\; x_3>0\}$, are the only Riemannian quadrics, while the \emph{de Sitter 2-space}  ${\mbox d}{\mathbb S}^2={\mathbb S}_{1,1}^2$, which is anti-isometric to the \emph{anti de Sitter 2-space} ${\mbox Ad}{\mathbb S}^2={\mathbb S}_{2,-1}^2$, are the only Lorentzian quadrics. Since we can get all the cases by looking at $-g$  we only consider the case where $\delta=1$.  For simplicity we replace ${\mathbb S}_{p,1}^2$ with ${\mathbb S}_{p}^2$ 

If $g$ is Riemannian, we can define a complex structure $j$  in $T{\mathbb S}_p^2$  as $j_x(v):=-x \times v$.  The formula
$$\langle a \times b, a \times d \rangle = \langle a, a \rangle \langle b, d \rangle- \langle a, d \rangle \langle b, a \rangle$$
 shows that $j$ is an isometry.

 In the case where ${\mathbb S}_p^2$ is the de Sitter 2-space ${\mbox d}{\mathbb S}^2$, the paracomplex structure $j$ can be defined as follows: If ${\mathbb R}^{1,2}$ denotes the Lorentzian space $({\mathbb R}^3,\left<,\right>_1)$, we define the Lorentzian cross product $\otimes$ in ${\mathbb R}^{1,2}$ by
\[
 u\otimes v:={\mbox I}_{1,2}\cdot (u\times v), 
\]
where $u\times v$ is the standard cross product in ${\mathbb R}^3$ and ${\mbox I}_{1,2}={\mbox {diag}}(-1,1,1)$. For $u,v,w\in {\mathbb R}^3$ we have
\begin{equation}\label{e:lorcrosspro}
\left<u\otimes v,u\otimes w\right>_1=-\left<u,u\right>_1\left<v,w\right>_1+\left<u,v\right>_1\left<u,w\right>_1.
\end{equation}
The paracomplex structure $j$ on ${\mbox d}{\mathbb S}^2$ is given by $j_x(v):=-x\otimes v$, where $v\in {\mathbb R}^3$ is such that $\left<x,v\right>_1=0$. It can be verified easily that $x\otimes (x\otimes v)=v$.

For the triples $({\mathbb S}_p^2,g,j)$, where $j$ is the (para-)complex structure as defined before, (i.e., $j^2=(-1)^{p+1}\mbox{Id}$) we may define the K\"ahler 2-form, $\omega(.,.):=g(j.,.)$ and therefore the quadruples $({\mathbb S}_p^2,g,j,\omega)$ are 2-dimensional K\"ahler structures for $p=0,2$, while for $p=1$ the quadraple $(d{\mathbb S}^2,g,j,\omega)$ is a para-K\"ahler structure.

\vspace{0.1in}

On the product ${\mathbb S}_p^2\times {\mathbb S}_p^2$, define two (para-)complex structures $J_1,J_2$ by
\[
J_1=j\oplus j,\qquad J_2=j\oplus -j,
\]
and the symplectic structures $\Omega_1,\Omega_2$ by
\[
\Omega_1=\pi^{\ast}_1\omega-\pi^{\ast}_2\omega,\qquad \Omega_2=\pi^{\ast}_1\omega+\pi^{\ast}_2\omega.
\]
We now define the metric $G$ in ${\mathbb S}_p^2\times {\mathbb S}_p^2$ by
\[
G(.,.)=(-1)^{p+1}\Omega_1(J_1.,.)=(-1)^{p+1}\Omega_2(J_2.,.).
\]
Observe that $G$ is of signature $(++--)$ and is called a \emph{neutral metric}. The Levi-Civita connection $\nabla$ with respect to the metric $G$ is given by
\[
\nabla_X Y=(D_{X_1} Y_1,D_{X_2} Y_2),
\]
where $X=(X_1,X_2),Y=(Y_1,Y_2)$ are vector fields in ${\mathbb S}_p^2\times {\mathbb S}_p^2$ and $D$ denote the Levi-Civita connection with respect to $g$. If $p=0$ or $p=2$, the endomorphisms $J_1,J_2$ are both almost complex structures and then 
\[
\Omega_k(X,Y)=\Omega_k(J_kX,J_kY)=-G(X,J_kY)=G(J_kX,Y).
\]
If $p=1$, the endomorphisms $J_1,J_2$ are both almost paracomplex structures. Thus,
\[
\Omega_k(X,Y)=-\Omega_k(J_kX,J_kY)=-G(X,J_kY)=G(J_kX,Y).
\]
We then have:
\begin{Prop}\cite{An4}, \cite{Ge1}
The quadruples $({\mathbb S}_p^2\times {\mathbb S}_p^2, G,J_1,\Omega_1)$ and $({\mathbb S}_p^2\times {\mathbb S}_p^2, G,J_2,\Omega_2)$ are (para-) K\"ahler structures when $({\mathbb S}_p^2,g)$ is Riemannian (Lorentzian). Furthermore, the neutral (para-) K\"ahler metric $G$ is locally conformally flat. 
\end{Prop}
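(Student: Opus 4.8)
The two assertions can be handled separately, both leaning on the product form of the Levi-Civita connection recorded above. For the (para-)K\"ahler claim, the essential input is that each factor $({\mathbb S}_p^2,g,j,\omega)$ is itself a (para-)K\"ahler surface; in real dimension two a compatible (para-)complex structure is automatically parallel, since it is determined through $\omega(\cdot,\cdot)=g(j\cdot,\cdot)$ by the parallel tensors $g$ and the area form $\omega$, so $Dj=0$. Feeding this into $\nabla_X Y=(D_{X_1}Y_1,D_{X_2}Y_2)$ gives at once
\[
(\nabla_X J_1)Y=\bigl((D_{X_1}j)Y_1,(D_{X_2}j)Y_2\bigr)=0,\qquad (\nabla_X J_2)Y=\bigl((D_{X_1}j)Y_1,-(D_{X_2}j)Y_2\bigr)=0,
\]
so that both $J_1=j\oplus j$ and $J_2=j\oplus(-j)$ are $\nabla$-parallel. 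Parallelism of $J_k$ forces the vanishing of its Nijenhuis tensor (hence integrability of the (para-)complex structure) and, since $\nabla G=0$, the closedness of $\Omega_k=G(J_k\cdot,\cdot)$. Combined with the compatibility identity $\Omega_k(X,Y)=G(J_kX,Y)$ and the (para-)Hermitian relation for $G$ already displayed, this shows the quadruples are (para-)K\"ahler; the only bookkeeping is the sign $(-1)^{p+1}$ separating the complex cases $p=0,2$ from the paracomplex case $p=1$.

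For the conformal flatness I would first make $G$ explicit. Using $\Omega_1=\pi_1^\ast\omega-\pi_2^\ast\omega$, $\omega=g(j\cdot,\cdot)$ and $j^2=(-1)^{p+1}\mathrm{Id}$, a one-line computation yields
\[
G(X,Y)=g(X_1,Y_1)-g(X_2,Y_2),
\]
so $G=g\oplus(-g)$ is the product of the constant-curvature surface $({\mathbb S}_p^2,g)$, of curvature $+1$, with $({\mathbb S}_p^2,-g)$, of curvature $-1$. Since in dimension four local conformal flatness is equivalent to the vanishing of the Weyl tensor for any signature, the plan is to compute the Weyl tensor of this product and check that it vanishes.

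The computation is driven by the block structure of a product of space forms. Writing $a,b,\dots$ for indices along the first factor and $\alpha,\beta,\dots$ along the second, the Riemann tensor is block diagonal with $R_{abcd}=c_1(g_{ac}g_{bd}-g_{ad}g_{bc})$ and $R_{\alpha\beta\gamma\delta}=c_2(g^{(2)}_{\alpha\gamma}g^{(2)}_{\beta\delta}-g^{(2)}_{\alpha\delta}g^{(2)}_{\beta\gamma})$, where $c_1=+1$, $c_2=-1$ and $g^{(2)}=-g$; hence $\mathrm{Ric}_{ab}=c_1 g_{ab}$, $\mathrm{Ric}_{\alpha\beta}=c_2 g^{(2)}_{\alpha\beta}$, all mixed components vanish, and the scalar curvature is $\mathrm{Scal}=2(c_1+c_2)=0$. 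With $\mathrm{Scal}=0$ the four-dimensional Weyl tensor reduces to
\[
W_{IJKL}=R_{IJKL}-\tfrac12\bigl(G_{IK}\mathrm{Ric}_{JL}-G_{IL}\mathrm{Ric}_{JK}-G_{JK}\mathrm{Ric}_{IL}+G_{JL}\mathrm{Ric}_{IK}\bigr),
\]
and I would verify its vanishing according to how the four indices distribute over the two factors. If all four indices lie in one factor the constant-curvature form of that block cancels the Ricci correction exactly; if three indices lie in one factor, or if the two index pairs respect the factor splitting, the off-diagonal metric and Ricci components kill every term. The single nontrivial component is the crossed one, with one index from each factor in each pair, which evaluates to $-\tfrac12(c_1+c_2)\,g_{ab}\,g^{(2)}_{\alpha\beta}$. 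This is precisely where the geometry enters, and I expect it to be the crux: the entire Weyl tensor collapses exactly because the two factors carry opposite constant curvatures, $c_1+c_2=0$, a cancellation special to the neutral product $(g,-g)$ that would fail for $(g,g)$. Hence $W\equiv 0$ and $G$ is locally conformally flat.
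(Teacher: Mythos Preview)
Your argument is correct on both counts. The paper itself does not prove this proposition; it is stated with citations to \cite{An4} and \cite{Ge1} and no argument is given, so there is nothing to compare your approach against beyond noting that you have supplied a clean self-contained proof where the paper defers to the literature.

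A couple of remarks for completeness. Your derivation of $G=g\oplus(-g)$ from the defining relation $G=(-1)^{p+1}\Omega_1(J_1\cdot,\cdot)$ and $j^2=(-1)^{p+1}\mathrm{Id}$ is exactly right, and this identification is what drives the Weyl computation. In that computation the only substantive check is indeed the crossed component $W_{a\alpha b\beta}=-\tfrac12(c_1+c_2)\,g^{(1)}_{ab}g^{(2)}_{\alpha\beta}$, and your observation that the neutral choice $(g,-g)$ forces $c_1+c_2=0$ is the heart of the matter; the same product with $(g,g)$ would give a nonzero Weyl tensor. The (para-)K\"ahler half is likewise sound: parallelism of $j$ on each two-dimensional factor, inherited by $J_k$ through the product connection, yields integrability and closedness of $\Omega_k$ simultaneously.
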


Let $F:\Sigma\rightarrow {\mathbb S}_p^2\times {\mathbb S}_p^2$ be an immersion of an oriented surface $\Sigma$. Associated to the two (para-) K\"ahler structures $(G,J_1,\Omega_1)$ and $(G,J_2,\Omega_2)$ on ${\mathbb S}_p^2\times {\mathbb S}_p^2$, there exist two functions $C_1,C_2$, called \emph{K\"ahler functions}, on $\Sigma$ defined by
\begin{equation}\label{e:defiofck}
F^{\ast}\Omega_k=C_k\omega_{\Sigma},\quad k=1,2,
\end{equation}
where $\omega_{\Sigma}$ denotes the area form of $\Sigma$ with respect to the induced metric $F^{\ast}G$. 
\vskip .1in

\begin{Def}\label{d:lagrangian}
A point $q$ in the surface $\Sigma$ in $({\mathbb S}_p^2\times {\mathbb S}_p^2,G)$ is said to be $\Omega_k${\it -Lagrangian} if $C_k(q)=0$. A {\it Lagrangian surface} is a surface that all points are Lagrangian.
\end{Def}
If $\pi_k:{\mathbb S}_p^2\times {\mathbb S}_p^2\rightarrow {\mathbb S}_p^2: (x_1,x_2)\mapsto x_k$ is the $k$-th projection, and $F$ is an immersion of an oriented surface $\Sigma$ in ${\mathbb S}_p^2\times {\mathbb S}_p^2$, we define the maps $F_1,F_2:\Sigma\rightarrow {\mathbb S}_p^2$ by $F_k:=\pi_k\circ F$, where $k=1,2$ and we write $F=(F_1,F_2)$. The Jacobians of $F_1$ and $F_2$ are defined by
\[
F^{\ast}_k\omega=\mbox{Jac}(F_k)\;\omega_{\Sigma},\quad k=1,2.
\]
Then, $$\mbox{Jac}(F_1)=(C_1+C_2)/2,\qquad \mbox{Jac}(F_2)=(-C_1+C_2)/2.$$ 
\vskip .1in
Note also that we have a class of Lagrangian surfaces:
\begin{example} Any product of non-null curves defined by $F_1(x,y)=\phi(x)$ and $F_2(x,y)=\psi(y)$ which use arc-length parameters  is Lagrangian.\end{example}
Let $\{e_1,e_2\}$ be an orthonormal frame with respect to $F^{\ast}G$ such that $|e_1|^2=\epsilon |e_2|^2=1$, where $\epsilon\in\{-1,1\}$ and is oriented such that $\jmath e_1=e_2$ and $\jmath e_2=-\epsilon e_1$. Then $\jmath$ is a complex structure when $\epsilon=1$ while, $\jmath$ is a paracomplex structure when $\epsilon=-1$. Furthermore, $\jmath$ is compatible with $(F^{\ast}G,\omega_{\Sigma})$ and thus $\omega_{\Sigma}(.,.)=F^{\ast}G(\jmath .,.)$. Hence,
\begin{equation}\label{e:omega1}
\omega_{\Sigma}(e_1,e_2)=F^{\ast}G(\jmath e_1,e_2)=F^{\ast}G(e_2,e_2)=\epsilon.
\end{equation}
Consider an oriented orthonormal frame $(s_1,s_2)$ of ${\mathbb S}^2_p$ such that $|s_1|^2=(-1)^p|s_2|^2=1$ and $js_1=s_2$ and $js_2=(-1)^{p+1} s_1$. Define the functions $\lambda_k,\tilde\lambda_k,\mu_k,\tilde\mu_k$ on $\Sigma$ by
\[
dF_1(e_1)=\lambda_1 s_1+\lambda_2 s_2,\quad dF_1(e_2)=\mu_1 s_1+\mu_2 s_2,
\]
\[
dF_2(e_1)=\tilde\lambda_1 s_1+\tilde\lambda_2 s_2,\quad dF_2(e_2)=\tilde\mu_1 s_1+\tilde\mu_2 s_2
\]
If $R$ and $K^g=1$ denote the Riemann and the Gauss curvature of $g$, respectively, we have
\[
g(R(s_1,s_2)s_2,s_1)=K^g(g(s_1,s_1)g(s_2,s_2)-g(s_1,s_2)^2)=(-1)^{p}.
\]
The Riemann curvature tensor $\bar R$ of $G$ satisfies
\begin{equation}\label{e:riemcurambient}
\bar R(e_1,e_2,e_2,e_1)=(-1)^{p}((\lambda_1\mu_2-\lambda_2\mu_1)^2-
(\tilde\lambda_1\tilde\mu_2-\tilde\lambda_2\tilde\mu_1)^2)
\end{equation}
On the other hand, from (\ref{e:defiofck}) and (\ref{e:omega1}), we have
\begin{eqnarray}
\epsilon C_1&=&\omega_{\Sigma}(e_1,e_2) C_1\nonumber \\
&=&(-1)^{p}(\lambda_1\mu_2-\lambda_2\mu_1-\tilde\lambda_1\tilde\mu_2+\tilde\lambda_2\tilde\mu_1).\label{e:relationforc1}
\end{eqnarray}
Similarly, we find
\begin{equation}\label{e:relationforc2}
\epsilon C_2=(-1)^{p}(\lambda_1\mu_2-\lambda_2\mu_1+\tilde\lambda_1\tilde\mu_2-\tilde\lambda_2\tilde\mu_1),
\end{equation}
and therefore, the relation (\ref{e:riemcurambient}) becomes
\[
\bar R(e_1,e_2,e_2,e_1)=(-1)^{p}C_1C_2.
\]
Note also that, 
\[
R(e_1,e_2,e_2,e_1)= \epsilon K.
\]
If $K$ denotes the Gauss curvature of $F^{\ast}G$, the Gauss equation of $F$ gives:
\begin{equation}\label{e:forthegaussequat}
K=\epsilon (-1)^{p} C_1C_2+2|H|^2-\frac{|h|^2}{2},
\end{equation}
where $h,H$ are the second fundamental form and the mean curvature, respectively and $|h|^2:=|h(e_1,e_1)|^2+|h(e_2,e_2)|^2+2\epsilon |h(e_1,e_2)|^2$. Notice that, if $\{e'_1,e'_2\}$ is another orthonormal frame the function $|h|^2$ is invariant. 

\noindent The following Proposition follows directly from (\ref{e:forthegaussequat}). 

\begin{Prop}
Every totally geodesic Lagrangian immersion in ${\mathbb S}_p^2\times {\mathbb S}_p^2$, with respect to either $(G,\Omega_1,J_1)$ or $(G,\Omega_1,J_1)$, is flat.
\end{Prop}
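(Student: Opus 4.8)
The plan is to read the conclusion directly off the Gauss equation (\ref{e:forthegaussequat}), which already packages all the relevant geometric data into a single identity
\[
K=\epsilon (-1)^{p} C_1C_2+2|H|^2-\frac{|h|^2}{2}.
\]
The two hypotheses — totally geodesic and Lagrangian — will be used to annihilate the three terms on the right-hand side, leaving $K\equiv 0$.

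First I would unwind the meaning of \emph{totally geodesic}: by definition the second fundamental form $h$ vanishes identically, so immediately $|h|^2=0$. Moreover the mean curvature is, up to normalization, the trace of $h$; in the present neutral setting one computes it using the frame with $|e_1|^2=\epsilon|e_2|^2=1$, giving $H=\tfrac12\bigl(h(e_1,e_1)+\epsilon\,h(e_2,e_2)\bigr)$, so that $h\equiv 0$ forces $H=0$ as well. Hence both terms $2|H|^2$ and $-\tfrac{|h|^2}{2}$ drop out of (\ref{e:forthegaussequat}).

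Next I would invoke Definition \ref{d:lagrangian}: an immersion that is Lagrangian with respect to $(G,\Omega_1,J_1)$ satisfies $C_1\equiv 0$, while one Lagrangian with respect to $(G,\Omega_2,J_2)$ satisfies $C_2\equiv 0$. In either case the product $C_1C_2$ vanishes identically, which kills the remaining curvature term $\epsilon(-1)^p C_1C_2$. Combining the two observations, the entire right-hand side of (\ref{e:forthegaussequat}) is identically zero, so $K\equiv 0$ and the surface is flat. I do not anticipate any genuine obstacle: the whole content of the statement has already been encoded in the Gauss equation, and the argument is essentially a one-line substitution. The only point deserving minor care is confirming that $h\equiv 0$ really implies $H=0$ in the pseudo-Riemannian signature, which is handled by the explicit trace formula above.
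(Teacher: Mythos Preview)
Your proposal is correct and is exactly the argument the paper has in mind: the authors state that the proposition ``follows directly from (\ref{e:forthegaussequat}),'' and your substitution of $h\equiv 0$, $H=0$, and $C_k=0$ into that Gauss equation is precisely the intended one-line proof.
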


Using (\ref{e:relationforc1}) and (\ref{e:relationforc2}) we have,
\[
\lambda_1\mu_2-\lambda_2\mu_1=\frac{\epsilon (-1)^{p}(C_1+C_2)}{2}=\epsilon (-1)^{p}\mbox{Jac}(F_1),
\]
\[
\tilde\lambda_1\tilde\mu_2-\tilde\lambda_2\tilde\mu_1=\frac{\epsilon (-1)^{p}(-C_1+C_2)}{2}=\epsilon (-1)^{p}\mbox{Jac}(F_2).
\]
Let $\{e_1,e_2\}$ be an oriented orthonormal frame of $F^{\ast}G$ given as before and let $\{v_1,v_2\}$ be sections of the normal bundle such that $\{e_1,e_2,v_1,v_2\}$ is an oriented orthonormal frame of $G$. Then,
\[
\bar R(e_1,e_2,v_2,v_1)=0.
\]
The curvature $K^{\bot}$ of the normal bundle is $K^{\bot}=R^{\bot}(e_1,e_2,v_2,v_1)$, where $R^{\bot}$ is the Riemann curvature tensor of the normal connection $\nabla^{\bot}$. The Ricci equation gives
\[
K^{\bot}=\bar R(e_1,e_2,v_2,v_1)+G([A_{v_2},A_{v_1}]e_1,e_2),
\]
where $A$ denotes the shape operator of $F$. But, $\bar R(e_1,e_2,v_2,v_1)=0$ and therefore, the Ricci equation becomes
\begin{equation}\label{e:norcarv}
K^{\bot}=G([A_{v_2},A_{v_1}]e_1,e_2).
\end{equation}

\vspace{0.1in}

\section{Minimal surfaces}\label{s:minimalsurfaces}



We now study minimal surfaces in $({\mathbb S}_p^2\times {\mathbb S}_p^2,G)$ which will be referred as \emph{$G$-minimal surfaces}.

\begin{Def}\label{d:paracomplexde}
Let $F$ be an immersion of a surface $\Sigma$ in ${\mathbb S}_p^2\times {\mathbb S}_p^2$. A point in $\Sigma$ is said to be a ({\it para-}) {\it complex point} with respect to the (para-)complex structure $J_k$ if the
tangent plane of the surface is preserved by $J_k$. The immersion $F$ is said to be a ({\it para-}) {\it complex curve} in ${\mathbb S}^2_p\times {\mathbb S}^2_p$ if all points are (para-)complex points with respect to either $J_1$ or $J_2$.
\end{Def}

For (para-)complex curves  in $({\mathbb S}^2_p\times {\mathbb S}^2_p,J_k,\Omega_k,G)$, we have the following:

\begin{Prop}
Any (para-)complex curve in $({\mathbb S}^2_p\times {\mathbb S}^2_p,J_k,\Omega_k,G)$ is a minimal immersion.
\end{Prop}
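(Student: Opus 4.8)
The plan is to reduce the statement to the classical fact that (para-)complex submanifolds of (para-)Kähler manifolds are minimal, adapting the bookkeeping to the neutral signature. By the Proposition above, each $(G,J_k,\Omega_k)$ is a (para-)Kähler structure, so the (para-)complex structure $J_k$ is parallel for the Levi-Civita connection $\nabla$ of $G$, i.e. $\nabla_X(J_kY)=J_k\nabla_XY$ for all vector fields $X,Y$. I would also record, from the relations $\Omega_k(X,Y)=G(J_kX,Y)=-G(X,J_kY)$ stated before the Proposition, that $J_k$ is $G$-skew-adjoint, and consequently that $G(J_kX,J_kY)=(-1)^pG(X,Y)$ and $J_k^2=(-1)^{p+1}\mathrm{Id}$.

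First I would use the definition of a (para-)complex curve: at every point the tangent plane $T\Sigma$ is $J_k$-invariant. From skew-adjointness it follows immediately that the normal bundle is $J_k$-invariant too, since for $X\in T\Sigma$ and $v\in N\Sigma$ one has $G(J_kv,X)=-G(v,J_kX)=0$. Then, applying the Gauss formula $\nabla_XY=\nabla_X^{\top}Y+h(X,Y)$ together with $\nabla J_k=0$, I would compute $\nabla_X(J_kY)$ in two ways and take normal components. Because $J_k$ preserves the tangent--normal splitting, this yields the key relation $h(X,J_kY)=J_kh(X,Y)$, and by the symmetry of $h$ also $h(J_kX,Y)=J_kh(X,Y)$.

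To conclude minimality I would choose the adapted orthonormal frame $e_2=J_ke_1$ with $e_1$ unit; one checks that $\{e_1,J_ke_1\}$ is $G$-orthogonal (skew-adjointness forces $G(e_1,J_ke_1)=-G(e_1,J_ke_1)$) and that $|e_2|^2=(-1)^pG(e_1,e_1)=(-1)^p$, so $\epsilon=(-1)^p$ in the notation of the previous section. The second-fundamental-form identity then gives $h(e_2,e_2)=h(J_ke_1,J_ke_1)=J_k^2h(e_1,e_1)=(-1)^{p+1}h(e_1,e_1)$, whence $\epsilon\,h(e_2,e_2)=-h(e_1,e_1)$. Therefore the mean curvature $H=\tfrac12\big(h(e_1,e_1)+\epsilon\,h(e_2,e_2)\big)$ vanishes, and $F$ is a minimal immersion.

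The only genuinely delicate point is the uniform sign bookkeeping that makes both the Riemannian/Kähler cases ($p=0,2$) and the Lorentzian/para-Kähler case ($p=1$) collapse into the single computation above; the crucial cancellation is that $\epsilon\,J_k^2=(-1)^p(-1)^{p+1}\mathrm{Id}=-\mathrm{Id}$ regardless of $p$, which is exactly what forces the trace of $h$ to vanish. Everything else---parallelism of $J_k$, invariance of the normal bundle, and the extraction of normal components---is formal and identical to the definite-signature argument.
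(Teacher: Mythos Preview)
Your argument is correct and is exactly the standard proof that (para-)holomorphic submanifolds of a (para-)K\"ahler manifold are minimal, with the signs for the neutral signature tracked carefully; the key cancellation $\epsilon\,J_k^{\,2}=-\mathrm{Id}$ is the heart of the matter and you have identified it cleanly. The paper itself states this Proposition without proof, treating it as a known fact, so there is nothing to compare against: your write-up simply supplies the omitted argument.
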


\vspace{0.1in}

Let $F=(F_1,F_2):\Sigma\rightarrow {\mathbb S}^2_p\times {\mathbb S}^2_p$ be a $G$-minimal immersion of an oriented surface $\Sigma$ in the product ${\mathbb S}^2_p\times {\mathbb S}^2_p$. We consider local isothermic coordinates $(x,y)$ of the induced metric $F^{\ast}G$ such that $\left<F_x,F_x\right>_p=\epsilon\left<F_y,F_y\right>_p=e^{2u}$ and $\left<F_x,F_y\right>_p=0$. 

Note that for $\epsilon=1$, the induced metric is Riemannian, in the sense that every non-zero tangent vector field has positive length. Analogously, we could consider the negative definite case where each tangent vector field has negative length.
\vskip .1in 	
Furthermore we can see that for isothermal coordinates:

\begin{eqnarray*}
C_1\epsilon e^{2u}=g(jF_1(\partial x),F_1(\partial y))-g(jF_2(\partial x),F_2(\partial y))\\
C_2\epsilon e^{2u}=g(jF_1(\partial x),F_1(\partial y))+g(jF_2(\partial x),F_2(\partial y)).
\end{eqnarray*}

Let $\{N,\tilde N\}$ be an orthonormal frame of the normal bundle such that $\{F_x,F_y,N,\tilde N\}$ is an oriented frame in $F^{\ast}T({\mathbb S}_p^2\times {\mathbb S}_p^2)$, with
\begin{equation}\label{e:normalnorm}
|N|^2=\epsilon |\tilde N|^2=-\epsilon b,
\end{equation}
where $b\in\{-1,1\}$. Note that, if $\epsilon=1$, then $b=1$.

We now introduce the conformal variable $z:=x+iy$, with $i^2=-\epsilon$. In other words, $z$ is a complex variable when $\epsilon=1$ while, $z$ is a paracomplex variable when $\epsilon=-1$. We then have the following operators:
\[
\frac{\partial}{\partial z}=\frac{1}{2}\Big(\frac{\partial}{\partial x}-\epsilon i \frac{\partial}{\partial y}\Big),\qquad
\frac{\partial}{\partial \bar z}=\frac{1}{2}\Big(\frac{\partial}{\partial x}+\epsilon i \frac{\partial}{\partial y}\Big),
\]
and 
\[
\left<F_z,F_z\right>_p=\left<(F_1)_z,(F_1)_z\right>_p-\left<(F_2)_z,(F_2)_z\right>_p=0,
\]
\[
|F_z|^2=\left<F_z,F_{\bar z}\right>_p=\left<(F_1)_z,(F_1)_{\bar z}\right>_p-\left<(F_2)_z,(F_2)_{\bar z}\right>_p=\frac{e^{2u}}{2}.
\]
If $\xi=(N-i\epsilon\tilde N)/\sqrt{2}$, we have that $|\xi|^2:=\left<\xi,\bar\xi\right>_p=-\epsilon b$ and $\{\xi,\bar\xi\}$ is an orthonormal frame of the complexified normal bundle. Furthermore,
\begin{equation}\label{e:exprgamma1}
J_1F_z=iC_1F_z+\epsilon\gamma_1\xi,\qquad J_2F_z=iC_2F_z+\epsilon\gamma_2\bar\xi
\end{equation}
where $\gamma_k$ is a (para-)complex function. Note that,
\[
|\gamma_k|^2=\frac{\epsilon b e^{2u}}{2}(\epsilon C_k^2+(-1)^{p+1}) 
\] and $\left<\,,\,\right>_p$ is extended bilinearly to the complex vectors.

With the definitions \ref{d:lagrangian} and \ref{d:paracomplexde} we have the following :
\begin{Prop}
An immersion $F$ is $\Omega_k$-Lagrangian in an open set $U$ iff $C_k$ vanishes on $U$. Moreover, $F$ is $J_k$-(para) complex curve in $U$ iff $C_k^2=1$ on $U$.
\end{Prop}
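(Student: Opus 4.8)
The plan is to handle the two equivalences separately, the first being formal and the second carrying the geometric content. For the Lagrangian statement I would argue straight from the definition of the Kähler functions: since $F^{\ast}\Omega_k=C_k\,\omega_{\Sigma}$ with $\omega_{\Sigma}$ the area form of $F^{\ast}G$, hence nowhere vanishing, the two-form $F^{\ast}\Omega_k$ vanishes identically on $U$ if and only if $C_k\equiv 0$ on $U$. By Definition \ref{d:lagrangian} a point is $\Omega_k$-Lagrangian exactly when $C_k$ vanishes there, so $F$ is $\Omega_k$-Lagrangian on $U$ iff $C_k$ vanishes on $U$; no computation beyond the definition is required.

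For the (para-)complex-curve equivalence I would first reformulate the complex-point condition analytically. The complexified tangent plane is spanned by $F_z$ and $F_{\bar z}$, so by Definition \ref{d:paracomplexde} a point is a $J_k$-(para)complex point precisely when $J_kF_z$ is tangent, i.e. when the normal component of $J_kF_z$ vanishes. By (\ref{e:exprgamma1}) that normal component is $\epsilon\gamma_k\xi$ (respectively $\epsilon\gamma_k\bar\xi$), so the complex-point condition is exactly $\gamma_k=0$. For the forward direction I would then give a purely geometric argument: if the tangent plane is $J_k$-invariant, $J_k$ restricts to it as an endomorphism with square $(-1)^{p+1}$, hence a (para-)complex structure on the $2$-plane, and since $J_k$ is compatible with $G$ this restriction must equal $\pm\jmath$, the induced structure on $\Sigma$. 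Using $\Omega_k(X,Y)=G(J_kX,Y)$ and $\omega_{\Sigma}(X,Y)=F^{\ast}G(\jmath X,Y)$ this gives $F^{\ast}\Omega_k=\pm\omega_{\Sigma}$, whence $C_k=\pm1$ and $C_k^2=1$; note that the same argument forces the induced metric to carry a compatible structure, i.e. the signature adapted to $J_k$ ($\epsilon=(-1)^p$).

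For the converse I would invoke the norm identity $|\gamma_k|^2=\tfrac{\epsilon b e^{2u}}{2}\bigl(\epsilon C_k^2+(-1)^{p+1}\bigr)$: substituting $C_k^2=1$ together with the adapted signature $\epsilon=(-1)^p$ makes the bracket vanish, so $|\gamma_k|^2=0$, and by the reformulation above one wants to conclude $\gamma_k=0$, i.e. $J_k$-invariance. I expect the main obstacle to sit exactly here, in the para-complex (neutral) case: when $\gamma_k$ is para-complex, $|\gamma_k|^2=0$ only says $\gamma_k$ is zero or a zero divisor, so it does not by itself force $\gamma_k=0$. I would resolve this by staying in the signature adapted to $J_k$ -- where $\gamma_k$ is genuinely complex and $|\gamma_k|^2$ is definite, so that $|\gamma_k|^2=0\Leftrightarrow\gamma_k=0$ -- or, failing that, by reading off the real and imaginary parts of $\gamma_k$ directly from (\ref{e:exprgamma1}) and checking that both vanish when $C_k^2=1$.
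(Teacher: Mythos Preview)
The paper gives no proof here: the proposition is stated as an immediate consequence of Definitions~\ref{d:lagrangian} and~\ref{d:paracomplexde} together with the norm identity $|\gamma_k|^2=\tfrac{\epsilon b e^{2u}}{2}\bigl(\epsilon C_k^2+(-1)^{p+1}\bigr)$ displayed just before it. Your argument is precisely that unpacking, so the approaches agree.

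For the Lagrangian half you and the paper coincide verbatim (it is literally the definition). For the (para-)complex half the paper's implicit route is $\gamma_k=0\Leftrightarrow |\gamma_k|^2=0\Leftrightarrow \epsilon C_k^2+(-1)^{p+1}=0$, read straight off the norm formula; your forward direction via $J_k|_{T\Sigma}=\pm\jmath$ is a clean geometric variant that additionally extracts the signature constraint $\epsilon=(-1)^p$ forced by the existence of (para-)complex points. The subtlety you flag in the converse---that over the para-complex numbers $|\gamma_k|^2=0$ does not by itself yield $\gamma_k=0$---is genuine and is glossed over in the paper's one-line justification; your resolution (restrict to the adapted signature $\epsilon=(-1)^p$, where the norm is definite and the equivalence $|\gamma_k|^2=0\Leftrightarrow\gamma_k=0$ holds) is correct and is exactly how the statement is used throughout the rest of the paper.
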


Away from (para-)complex points (that is, $\epsilon C_k^2+(-1)^{p+1}\neq 0$, or $\gamma_1\gamma_2\neq 0$) let $F=(F_1,F_2)$ be a minimal immersion of a surface in ${\mathbb S}^2_p\times {\mathbb S}^2_p$. If $\hat{F}:=(F_1,-F_2)$, then $\{(F+\hat{F})/2,(F-\hat{F})/2\}$ is an orthogonal frame along $F$ of the normal bundle of ${\mathbb S}^2_p\times {\mathbb S}^2_p$ in ${\mathbb R}^{3-p,p}\times {\mathbb R}^{3-p,p}$. 

From $\hat{F}_z=(-1)^{p+1}J_1J_2 F_z$ and using the fact that $F$ is minimal we obtain:
\begin{equation}\label{e:tothema}
\hat{F}_z=\epsilon (-1)^p(C_1C_2 F_z -2e^{-2u}b\gamma_1\gamma_2F_{\bar z}-i\gamma_1C_2\xi-i\gamma_2 C_1\bar\xi).
\end{equation}
This allows us to compute the fundamental equations of the immersion:
\begin{Prop}\label{p:frenequations}
The Frenet equations of the minimal immersion $F$, away from (para-)complex points, are given by
\begin{eqnarray}
F_{zz}&=&2u_zF_z+f_1\xi+f_2\bar\xi+\epsilon (-1)^{p}\frac{b\gamma_1\gamma_2}{2}F,\label{e:protothema} \\
F_{z\bar z}&=&(-1)^{p+1}\frac{\epsilon C_1C_2 e^{2u}}{4}F-\frac{e^{2u}}{4}\hat F,\label{e:deuterothema} \\
\xi_z&=&2\epsilon e^{-2u}bf_2F_{\bar z}+A\xi
+(-1)^{p+1}\frac{ibC_1\gamma_2}{2}F,\label{e:ena} \\
\bar\xi_{z}&=&2\epsilon e^{-2u}bf_1F_{\bar z}-A\bar\xi
+(-1)^{p+1}\frac{ibC_2\gamma_1}{2}F,\label{e:dio1}
\end{eqnarray}
for certain local (para-)complex functions $f_1,f_2$ and,
\begin{equation}\label{e:themissing}
A=(-1)^{j+1}\left(2 u_{z}-\frac{2i\epsilon C_{j} f_{j}+ (\gamma_{j})_{ z}}{\gamma_{j}}\right).
\end{equation}
\end{Prop}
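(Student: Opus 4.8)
The plan is to work in the flat ambient space $E:=\R^3\times\R^3$ into which $\S^2_p\times\S^2_p$ sits and to expand every second derivative of $F$ in the adapted (complexified) frame $\{F_z,F_{\bar z},\xi,\bar\xi,F,\hat F\}$. First I would record the Gram matrix of this frame. Since each $F_k$ is a unit position vector on the sphere, differentiating $\langle F_k,F_k\rangle_p=1$ and keeping track of the signs coming from the $(g,-g)$ splitting gives the null pairings $\langle F_z,F_z\rangle=0$, $\langle F_z,F_{\bar z}\rangle=e^{2u}/2$, together with $\langle F,F\rangle=\langle\hat F,\hat F\rangle=0$ and $\langle F,\hat F\rangle=2$, while the position vectors $F,\hat F$, being normal to the product, are orthogonal to $F_z,F_{\bar z},\xi,\bar\xi$. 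From $\langle\xi,\bar\xi\rangle=-\epsilon b$ and the definition $\xi=(N-i\epsilon\tilde N)/\sqrt2$ one also gets $\langle\xi,\xi\rangle=0$. Because this is a doubled null frame, the coefficient of each basis vector in an expansion is recovered simply by pairing against its dual partner ($F_z\leftrightarrow F_{\bar z}$, $\xi\leftrightarrow\bar\xi$, $F\leftrightarrow\hat F$).

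With this in hand, each Frenet coefficient is obtained by differentiating one fundamental pairing and reading off the answer. For example $\langle F_z,F_z\rangle=0$ and $\langle F_z,F_{\bar z}\rangle=e^{2u}/2$ yield the tangential coefficient $2u_z$ in $F_{zz}$; pairing $F_{zz}$ with $\hat F$ and substituting (\ref{e:tothema}) for $\hat F_z$ produces the $F$-coefficient $\epsilon(-1)^p b\gamma_1\gamma_2/2$, and pairing with $F$ shows the $\hat F$-coefficient vanishes, while $f_1,f_2$ are merely named. For $F_{z\bar z}$ the essential input is minimality: the mean curvature vector of $F$ in the product is proportional to the part of $F_{z\bar z}$ normal to the surface inside the product, that is, its $\xi,\bar\xi$ components, so these vanish; the surviving $F$- and $\hat F$-coefficients then come from differentiating $\langle F_z,\hat F\rangle=0$ and $\langle F_z,F\rangle=0$ and again invoking (\ref{e:tothema}). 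The equations for $\xi_z,\bar\xi_z$ are handled identically, their $F_{\bar z}$-coefficients emerging as $2\epsilon e^{-2u}bf_2$ and $2\epsilon e^{-2u}bf_1$ after substituting $F_{zz}$, and their $F$-coefficients from (\ref{e:tothema}); the relation that the $\bar\xi$-coefficient of $\bar\xi_z$ is minus the $\xi$-coefficient of $\xi_z$ follows from differentiating $\langle\xi,\bar\xi\rangle=-\epsilon b$.

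The main obstacle is the identification of the normal-connection coefficient $A=\langle\xi_z,\bar\xi\rangle/(-\epsilon b)$, i.e. formula (\ref{e:themissing}). Here I would use the parallelism of the (para-)K\"ahler structure. Extending $j$ to $\R^3$ by the (Lorentzian) cross product, the identity $v\times v=0$ gives the flat-ambient relation $\partial_z(J_jF_z)=J_jF_{zz}$. Solving the defining relation for the normal field, $\xi=(\epsilon\gamma_1)^{-1}(J_1-iC_1)F_z$ (and $\bar\xi=(\epsilon\gamma_2)^{-1}(J_2-iC_2)F_z$), and differentiating, I would substitute $F_{zz}$ from (\ref{e:protothema}) and read off the $\xi$-component. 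This reduces the problem to the action of $J_j$ on the normal frame, and the crucial step is to establish, directly from $J_1F_z=iC_1F_z+\epsilon\gamma_1\xi$ and $J_2F_z=iC_2F_z+\epsilon\gamma_2\bar\xi$, the two identities $\langle J_1\xi,\bar\xi\rangle=i\epsilon bC_1$ and $\langle J_1\bar\xi,\bar\xi\rangle=0$, equivalently that the $\xi$-component of $J_1\xi$ is $-iC_1$ and that of $J_1\bar\xi$ is $0$.

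I expect the delicate point to be the second identity: writing $J_1J_2F_z=(-1)^{p+1}\hat F_z$, expanding via (\ref{e:tothema}), and combining with $iC_2J_1F_z$, the $\bar\xi$-component collapses precisely because $(-1)^{p+1}(-1)^p+1=0$. Granting these two facts, the $\xi$-term of $\xi_z$ assembles into $A=2u_z-\bigl(2i\epsilon C_1f_1+(\gamma_1)_z\bigr)/\gamma_1$, which is the $j=1$ case of (\ref{e:themissing}); running the identical argument with $J_2$ and $\bar\xi$ computes the $\bar\xi$-coefficient of $\bar\xi_z$, which equals $-A$, thereby giving the $j=2$ case and confirming that the two expressions for $A$ coincide.
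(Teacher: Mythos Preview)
Your proposal is correct and follows essentially the same strategy as the paper: expand in the frame $\{F_z,F_{\bar z},\xi,\bar\xi,F,\hat F\}$, use (\ref{e:tothema}) and $\langle F,\hat F\rangle=2$ for the $F$- and $\hat F$-coefficients, and pin down $A$ by differentiating (\ref{e:exprgamma1}) together with the parallelism of $J_j$. The only minor difference is that the paper obtains $J_1\xi$ (and hence $J_1\bar\xi$ by conjugation) more directly, by applying $J_1$ to both sides of $J_1F_z=iC_1F_z+\epsilon\gamma_1\xi$ and invoking $J_1^2=(-1)^{p+1}\mathrm{Id}$, which is a cleaner route than your detour through $J_1J_2F_z=(-1)^{p+1}\hat F_z$ and (\ref{e:tothema}).
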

\begin{proof}
From (\ref{e:tothema}) and $\langle F,\hat{F}\rangle=2$, we find (\ref{e:protothema}) and  (\ref{e:deuterothema}). Using now the facts that $J_1^2=(-1)^{p+1}{\mbox Id}$ and $i^2=-\epsilon$ and (\ref{e:exprgamma1}), we have:
\[
J_1\xi=2b e^{-2u}\bar\gamma_1 F_z-iC_1\xi,\qquad J_2\xi=2b e^{-2u}\gamma_2 F_{\bar z}+iC_2\xi.
\]
Furthermore, (\ref{e:ena}) and (\ref{e:dio1}) come directly from differentiating both expressions of (\ref{e:exprgamma1}) with respect to $z$  and using the following expressions
\[
(J_1F_{z})_z=2(iu_zC_1+be^{-2u}\bar\gamma_1f_1)F_z+2be^{-2u}\gamma_1f_2F_{\bar z}+(2\epsilon u_z\gamma_1
-iC_1f_1)\xi+iC_1f_2\bar\xi,
\]
and 
\[
\nabla^\perp_{\bar z}(J_2F_{z}):=(J_2F_{z})_{\bar z}=\epsilon (-1)^{p+1}\left(\frac{iC_1C_2^2e^{2u}}{4}-\frac{ib|\gamma_2|^2C_1}{2}\right)F-\frac{iC_2e^{2u}}{4}\hat F.
\]
Note that, from $|\xi|^2=-\epsilon b$ together with (\ref{e:ena}) and (\ref{e:dio1}), we have
\[
A=2 u_z-\frac{2i\epsilon C_1f_1+ (\gamma_1)_z}{\gamma_1}=-2 u_z+\frac{2i\epsilon C_2f_2+ (\gamma_2)_z}{\gamma_2}.
\]
\end{proof}

The septuple $(u,C_j,\gamma_j,f_j\; :j=1,2)$ is called the \emph{fundamental data} of the pair $(F,\xi)$. As in the case of \cite{torurb}, if $\{\xi^{\ast},{\bar\xi}^{\ast}\}$ is another orthonormal oriented frame in the complexifed normal bundle then there is a function $\theta$ such that $\xi^{\ast}= {\texttt{exp}\epsilon}(i \theta)\xi$, where 
\[
 {\texttt{exp}\epsilon}(i \theta):= \left\{ 
\begin{array}{ccc} \cos (\theta) + i \sin (\theta) &\mbox{ if }& \epsilon=1, \\
\sinh (\theta) + i \cosh (\theta) &\mbox{ if } &\epsilon=-1.\end{array} \right.
\]
In this case, the fundamental data $(u,C_j,\gamma_j^{\ast},f_j^{\ast})$ of the pair $(F,\xi^{\ast})$ are related to the fundamental data of $(F,\xi)$ as follows
\[
\gamma_1^{\ast}={\texttt{exp}\epsilon}(-i\theta)\gamma_1,\qquad \gamma_2^{\ast}={\texttt{exp}\epsilon}(i \theta)\gamma_2,\qquad f_1^{\ast}={\texttt{exp}\epsilon}(-i \theta)f_1,\qquad f_2^{\ast}={\texttt{exp}\epsilon}(i \theta)f_2.
\]
\begin{Prop}\label{p:funddata}
Let $F:\Sigma\rightarrow {\mathbb S}^2_p\times {\mathbb S}^2_p$ be a minimal immersion of an orientable surface $\Sigma$ and $(u,C_j,\gamma_j,f_j:j=1,2)$ its fundamental data for a given orthonormal frame. For $j=1,2$, away from (para-)complex points, we have:
\begin{eqnarray}
(C_j)_z&=&-2i\epsilon be^{-2u}\bar\gamma_j f_j,\label{e:kahler} \\
(\bar f_j)_{z}&=&(-1)^{j+1}\bar f_j A+\frac{i\epsilon (-1)^{p+1} e^{2u}\bar\gamma_jC_{j'}}{4},\label{e:deroff1}\\
(\bar\gamma_j)_z&=&(-1)^{j+1}\bar\gamma_jA,\label{e:derivofgamma}\\
|\gamma_j|^2&=&\frac{\epsilon b e^{2u}}{2}(\epsilon C^2_j+(-1)^{p+1}).\label{e:gammanorsec}
\end{eqnarray}
\end{Prop}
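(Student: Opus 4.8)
\section*{Proof proposal}

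The plan is to obtain all four identities by differentiating the defining relations (\ref{e:exprgamma1}) once more and reading off the components in the moving frame $\{F_z,F_{\bar z},\xi,\bar\xi\}$ together with the two ambient position fields $F,\hat F$. Throughout I will use the pairings of this frame, namely $\langle F_z,F_z\rangle_p=\langle\xi,\xi\rangle_p=0$, $\langle F_z,F_{\bar z}\rangle_p=e^{2u}/2$ and $\langle\xi,\bar\xi\rangle_p=-\epsilon b$, together with the crucial \emph{null} relations $\langle F,F\rangle_p=\langle\hat F,\hat F\rangle_p=0$ and $\langle F,\hat F\rangle_p=2$, and the orthogonality of $F,\hat F$ to every product-tangent vector. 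Two structural observations drive every simplification. First, the (para-)complex structure $J_k$ annihilates the position fields, $J_kF=J_k\hat F=0$. Second, since $J_k$ is parallel only along the product, differentiating it produces a correction $(\partial_zJ_k)(\,\cdot\,)$, but evaluated on a product-tangent vector this correction is a cross product of tangent vectors to the spheres, hence normal to each sphere and so lies in $\mathrm{span}\{F,\hat F\}$. Consequently, pairing any differentiated expression against $F_z,F_{\bar z},\xi$ or $\bar\xi$ kills both these corrections and all the second-fundamental-form terms of $\mathbb{S}^2_p\times\mathbb{S}^2_p\subset\mathbb{R}^{3-p,p}\times\mathbb{R}^{3-p,p}$.

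Equation (\ref{e:gammanorsec}) is purely algebraic and has in fact already been recorded right after (\ref{e:exprgamma1}); it follows by computing $|J_kF_z|_p^2$ from (\ref{e:exprgamma1}) and using that $J_k$ is an (anti-)isometry for $G$. For the Kähler equation (\ref{e:kahler}) I would start from the scalar relation $\langle J_1F_z,F_{\bar z}\rangle_p=\tfrac{i}{2}C_1e^{2u}$, obtained by pairing (\ref{e:exprgamma1}) with $F_{\bar z}$, and differentiate it in $z$. On the right the term $\langle J_1F_z,F_{z\bar z}\rangle_p$ drops because $F_{z\bar z}\in\mathrm{span}\{F,\hat F\}$ by (\ref{e:deuterothema}); on the left $\partial_z(J_1F_z)=J_1F_{zz}+(\partial_zJ_1)F_z$, and after substituting (\ref{e:protothema}) and the frame values of $J_1F_z,J_1\xi,J_1\bar\xi$ only the $F_z$-component survives the pairing, giving $(C_1)_z=-2i\epsilon be^{-2u}\bar\gamma_1f_1$; the case $j=2$ is identical with $\bar\xi$ in place of $\xi$.

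For (\ref{e:derivofgamma}) I would write $\bar\gamma_1=-b\langle J_1F_{\bar z},\xi\rangle_p$ from (\ref{e:exprgamma1}) and differentiate in $z$. Here $\partial_z(J_1F_{\bar z})=J_1F_{z\bar z}+(\partial_zJ_1)F_{\bar z}$, and both pieces disappear upon pairing with $\xi$ (the first because $J_1$ annihilates $\mathrm{span}\{F,\hat F\}$, the second because the correction lies in that span), so only $-b\langle J_1F_{\bar z},\xi_z\rangle_p$ remains; inserting (\ref{e:ena}) and using $\langle\xi,\bar\xi\rangle_p=-\epsilon b$ leaves exactly $\bar\gamma_1A$, and the $j=2$ computation produces the sign $(-1)^{j+1}$ because (\ref{e:dio1}) carries $-A$ on its $\bar\xi$-term.

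Finally, the Codazzi-type identity (\ref{e:deroff1}) is the delicate one, and I expect it to be the main obstacle, since here the second-order data cannot be avoided. Writing $\bar f_1=-\epsilon b\langle F_{\bar z\bar z},\xi\rangle_p$ and differentiating in $z$, the term $-\epsilon b\langle F_{\bar z\bar z},\xi_z\rangle_p$ yields $\bar f_1A$ only after the would-be quadratic term in $\gamma$ (coming from an $F$–$F$ pairing) is killed by $\langle F,F\rangle_p=0$; meanwhile the remaining term $-\epsilon b\langle\partial_zF_{\bar z\bar z},\xi\rangle_p=-\epsilon b\langle\partial_{\bar z}F_{z\bar z},\xi\rangle_p$ forces in the second Kähler function $C_2$ through $\hat F_{\bar z}$, via (\ref{e:tothema}) and (\ref{e:deuterothema}), producing the cross term $\tfrac{i\epsilon(-1)^{p+1}}{4}e^{2u}\bar\gamma_1C_2$. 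The hard part is precisely this coupling of the two (para-)Kähler structures together with keeping the signs $\epsilon,b,(-1)^p,(-1)^{j+1}$ consistent; the cancellation of the spurious $|\gamma_2|^2$-term through the null relation $\langle F,F\rangle_p=0$ is the single step most likely to be mishandled.
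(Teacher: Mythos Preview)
Your proposal is correct and follows essentially the same route as the paper: the paper differentiates the relations (\ref{e:exprgamma1}) and uses the Frenet equations (\ref{e:ena})--(\ref{e:dio1}) to obtain (\ref{e:kahler}) and (\ref{e:derivofgamma}), and then invokes the integrability $F_{z\bar z z}=F_{zz\bar z}$ for (\ref{e:deroff1}), which is exactly your computation of $(\bar f_1)_z$ via $\partial_zF_{\bar z\bar z}=\partial_{\bar z}F_{z\bar z}$ paired against $\xi$. The only cosmetic difference is that you extract scalar pairings first and then differentiate, whereas the paper differentiates the vector relations and compares components; the substance (and in particular the way the cross term $C_{j'}$ enters through $\hat F_{\bar z}$ in (\ref{e:tothema})) is identical.
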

\begin{proof}
We differentiate with respect to $z$ and $\bar z$ the relations in (\ref{e:exprgamma1}) and then by  (\ref{e:ena}) and (\ref{e:dio1}) we obtain (\ref{e:kahler}) and (\ref{e:derivofgamma}).
Finally using the relation $F_{z\bar z z}=F_{zz\bar z}$ we obtain (\ref{e:deroff1}). 
\end{proof}

Given (para-)complex functions $u,C_j,\gamma_j,f_j$ satisfying (\ref{e:kahler}), (\ref{e:deroff1}) and (\ref{e:derivofgamma}), the following Proposition proves that there exists a unique minimal surface with Riemannian induced metric, whose fundamental data is $(u,C_j,\gamma_j,f_j:j=1,2)$.

\begin{Prop}\label{p:compatibility}
Let $\Sigma$ be a smooth surface and $u,A,C_j,\gamma_j,f_j,j=1,2$, be complex functions with $C_j$ non-constant so that (\ref{e:kahler}), (\ref{e:deroff1}), (\ref{e:derivofgamma}) and (\ref{e:gammanorsec}) hold. Suppose that $\gamma_j$ vanishes in at most isolated points. Then there exist, up to congruences, a unique minimal immersion $F:\Sigma\rightarrow {\mathbb S}^2_p\times {\mathbb S}^2_p$ that is non-(para) complex, and an orthonormal frame of the complexified normal bundle $\{\xi,\bar\xi\}$ whose fundamental data is $(u,C_j,\gamma_j,f_j:j=1,2)$.
\end{Prop}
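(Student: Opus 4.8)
The plan is to prove this as a Bonnet-type (fundamental-theorem-of-surfaces) existence and uniqueness result, where the roles of the Gauss--Codazzi--Ricci equations are played by the integrability conditions (\ref{e:kahler})--(\ref{e:gammanorsec}). The strategy is to reconstruct the immersion by integrating the Frenet system of Proposition~\ref{p:frenequations} as a linear overdetermined PDE system for the moving frame, and to verify that the given structure equations are precisely the compatibility conditions guaranteeing a solution.

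First I would set up the frame field. The natural ambient frame to solve for is $\{F_z,F_{\bar z},\xi,\bar\xi\}$ together with $F$ and $\hat F$, viewed as a map into $\mathbb{R}^{3-p,p}\times\mathbb{R}^{3-p,p}$. From the data $(u,C_j,\gamma_j,f_j)$ I would first define the auxiliary function $A$ by (\ref{e:themissing}); using (\ref{e:derivofgamma}) one checks the two expressions for $A$ agree, so $A$ is well defined. I would then assemble the coefficient matrices of the first-order linear system consisting of (\ref{e:protothema})--(\ref{e:dio1}), their complex conjugates, and the equations $F_{z\bar z}=(-1)^{p+1}\tfrac{\epsilon C_1C_2e^{2u}}{4}F-\tfrac{e^{2u}}{4}\hat F$ and the corresponding evolution of $\hat F$ obtained from (\ref{e:tothema}). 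This is a linear system $\Phi_z=\mathcal{U}\Phi$, $\Phi_{\bar z}=\mathcal{V}\Phi$ for the column $\Phi=(F,\hat F,F_z,F_{\bar z},\xi,\bar\xi)^{t}$.

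Next I would establish integrability. The system admits a local solution, unique given initial frame data at a base point, precisely when the Frobenius/Maurer--Cartan condition $\mathcal{U}_{\bar z}-\mathcal{V}_z+[\mathcal{U},\mathcal{V}]=0$ holds. Here the key step is to show that equations (\ref{e:kahler}), (\ref{e:deroff1}), (\ref{e:derivofgamma}) and (\ref{e:gammanorsec}) are exactly the entries of this compatibility condition: differentiating cross terms and matching, e.g., $F_{zz\bar z}=F_{z\bar z z}$ reproduces (\ref{e:deroff1}) (as already used in Proposition~\ref{p:funddata}), while the mixed derivatives of $\xi$ and $\bar\xi$ recover (\ref{e:kahler}) and (\ref{e:derivofgamma}), and (\ref{e:gammanorsec}) encodes the normalization $|\xi|^2=-\epsilon b$ together with $|\gamma_j|^2$. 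Given integrability, Frobenius yields a unique frame field with prescribed initial values, and integrating $F_z,F_{\bar z}$ produces the immersion $F$; the hypothesis that $C_j$ is non-constant and $\gamma_j$ vanishes only at isolated points guarantees we stay away from (para-)complex points so that $A$ and the system are defined. Uniqueness up to congruence follows because two solutions differ by an isometry of the ambient product fixing the initial frame, i.e. an element of the isometry group acting on $\mathbb{S}^2_p\times\mathbb{S}^2_p$.

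Finally I would verify that the constructed $F$ genuinely lands in $\mathbb{S}^2_p\times\mathbb{S}^2_p$, is an immersion with the prescribed induced metric, and is minimal with the prescribed fundamental data. This means checking that the algebraic relations $\langle F,F\rangle$ on each factor, $\langle F,\hat F\rangle=2$, the conformality relations $\langle F_z,F_z\rangle_p=0$, $|F_z|^2=e^{2u}/2$, and the normal normalizations are preserved under the flow: each of these is constant along $z,\bar z$ by a direct differentiation using the structure equations, so it suffices to impose them at the base point. That $F$ is minimal, and that $(u,C_j,\gamma_j,f_j)$ are its recovered fundamental data, then follows by reading off (\ref{e:deuterothema}) and (\ref{e:exprgamma1}) from the reconstructed frame. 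I expect the main obstacle to be the bookkeeping in the integrability computation: verifying that the full Maurer--Cartan system collapses to exactly the four stated equations (and no extra hidden constraints) requires carefully tracking the $\epsilon$, $b$, and $(-1)^p$ sign conventions through every bracket, and confirming that the two definitions of $A$ remain consistent under differentiation. A secondary subtlety is ensuring that the preserved quadratic relations are compatible with the normal-bundle signature choice (\ref{e:normalnorm}), which must be handled uniformly across the complex ($\epsilon=1$) and paracomplex ($\epsilon=-1$) cases.
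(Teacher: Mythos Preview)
Your overall strategy is correct and matches the paper's: set up the Frenet system of Proposition~\ref{p:frenequations} as a linear first-order system for the frame, verify Frobenius integrability, and integrate. However, your description of the integrability check contains a subtle but important gap. You assert that the Maurer--Cartan condition ``collapses to exactly the four stated equations,'' but in fact it does not: the $F_z$-component of $F_{zz\bar z}=F_{z\bar z z}$ (and similarly the tangential component of $\xi_{z\bar z}=\xi_{\bar z z}$) produces a Gauss-type equation
\[
2u_{z\bar z}+4\epsilon e^{-2u}|f_j|^2+(-1)^j(\bar A_z+A_{\bar z})+\tfrac{\epsilon(-1)^p e^{2u}}{2}C_1C_2=0,
\]
which is \emph{not} among the four hypotheses (\ref{e:kahler})--(\ref{e:gammanorsec}). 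The whole point of the paper's proof is that this apparent extra constraint is automatically satisfied: one derives it as a \emph{consequence} of (\ref{e:kahler})--(\ref{e:gammanorsec}) by computing $(\gamma_j)_{z\bar z}=(\gamma_j)_{\bar z z}$, using the expression $(\gamma_j)_z=(-1)^jA\gamma_j+2u_z\gamma_j-2i\epsilon C_jf_j$ coming from (\ref{e:themissing}). Once this identity is in hand, the remaining integrability checks go through.

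So your plan is sound, but the sentence ``no extra hidden constraints'' is precisely where the real work lies, and you should make that step explicit: first derive the Gauss-type identity above from the compatibility of $\gamma_j$, and then use it (together with (\ref{e:kahler})--(\ref{e:gammanorsec})) to close the Frenet integrability. The paper's proof is terse and does only this step, leaving the standard Bonnet-type reconstruction (your steps on preserving the quadratic relations and uniqueness up to congruence) implicit; your more detailed outline of those parts is a welcome expansion.
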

\begin{proof}
From the definition of $A$, where $\gamma_j\ne 0$ we have
\[
(\gamma_j)_z=(-1)^{j}A\gamma_j+2u_z\gamma_j-2i\epsilon C_jf_j.
\]
From ${\gamma_j}_{z\bar z}={\gamma_j}_{\bar z z}$ and using (\ref{e:kahler}), (\ref{e:deroff1}), (\ref{e:derivofgamma}) and (\ref{e:gammanorsec}), we get
\begin{equation}\label{e:theimportter}
2u_{z\bar z}+4\epsilon e^{-2u}|f_j|^2+(-1)^j(\bar A_z+A_{\bar z})+\epsilon(-1)^p e^{2u}C_1C_2/2=0.
\end{equation}
Using now (\ref{e:theimportter}), a brief computation shows $F_{zz\bar z}=F_{z\bar z z}$ and $\xi_{\bar z z}=\xi_{z\bar z}$, which are the integrability conditions of the Frenet system.
\end{proof}



\subsection{Lagrangian surfaces}
The following Theorem gives a classification of all Lagrangian $G$-minimal surfaces in ${\mathbb S}^2_p\times {\mathbb S}^2_p$.
\begin{Thm}\label{t:claslagrangiansurfaces}
Let $F:\Sigma\rightarrow {\mathbb S}^2_p\times {\mathbb S}^2_p$ be a $G$-minimal immersion. Then $F$ is $\Omega_1$-Lagrangian immersion if and only if $F$ is $\Omega_2$-Lagrangian immersion.
\end{Thm}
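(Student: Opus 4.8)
The plan is to recast the statement in terms of the Kähler functions and then read off the conclusion from the structure equations of Proposition~\ref{p:funddata}. By Definition~\ref{d:lagrangian}, $F$ is $\Omega_1$-Lagrangian precisely when $C_1\equiv 0$ and $\Omega_2$-Lagrangian precisely when $C_2\equiv 0$, so the theorem is the assertion that the two conditions $C_1\equiv 0$ and $C_2\equiv 0$ are equivalent on $\Sigma$. I would prove the implication $C_1\equiv 0\Rightarrow C_2\equiv 0$ in detail; the reverse follows by running the identical argument with the indices $1$ and $2$ interchanged, the only change being the harmless sign $(-1)^{j+1}$ multiplying $A$ in \eqref{e:deroff1} and \eqref{e:derivofgamma}.

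First I would localize to the open set $U\subset\Sigma$ of non-(para-)complex points, where the fundamental data and the relations \eqref{e:kahler}, \eqref{e:deroff1}, \eqref{e:gammanorsec} are valid. Assuming $C_1\equiv 0$, the decisive observation is that \eqref{e:gammanorsec} collapses to $|\gamma_1|^2=\tfrac{\epsilon b e^{2u}}{2}(-1)^{p+1}$, which is nowhere zero; hence $\gamma_1$ is a nowhere-null (para-)complex function and therefore invertible. This invertibility is exactly what makes the cancellations below legitimate in the para-complex setting, where nonzero zero divisors exist. Feeding $C_1\equiv 0$ into \eqref{e:kahler} with $j=1$ gives $0=(C_1)_z=-2i\epsilon b e^{-2u}\bar\gamma_1 f_1$, and multiplying by the inverse of $\bar\gamma_1$ yields $f_1\equiv 0$ on $U$. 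The conclusion then drops out of \eqref{e:deroff1} with $j=1$: once $f_1\equiv 0$, both $(\bar f_1)_z$ and the term $\bar f_1 A$ vanish, leaving $0=\tfrac{i\epsilon(-1)^{p+1}e^{2u}}{4}\,\bar\gamma_1 C_2$, and dividing again by $\bar\gamma_1$ forces $C_2\equiv 0$ on $U$.

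It remains to propagate $C_2\equiv 0$ across the degeneracy locus $\Sigma\setminus U$. By \eqref{e:gammanorsec} this locus is $\{\epsilon C_2^2+(-1)^{p+1}=0\}$, i.e. $\{C_2^2=\epsilon(-1)^p\}$. When $\epsilon(-1)^p=-1$ it is empty, since $C_2$ is real-valued, and then $U=\Sigma$ and we are done at once. When $\epsilon(-1)^p=1$ it is the set of $J_2$-complex points ($C_2^2=1$); here I would invoke continuity of $C_2$ and connectedness of $\Sigma$: since $C_2\equiv 0$ on the open set $U$, any boundary point of $U$ in $\Sigma$ would carry both $C_2=0$ (by continuity) and $C_2^2=1$, which is impossible, so $U$ is closed as well as open.

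Thus either $U=\Sigma$, giving $C_2\equiv 0$ globally, or $U=\varnothing$, meaning $F$ is everywhere $J_2$-complex. Excluding this last possibility for an $\Omega_1$-Lagrangian surface is the step I expect to be the main obstacle: I would rule it out with the pointwise identity $\Omega_1(w,J_2 w)=\pm(|w_1|^2+|w_2|^2)$ for a vector $w=(w_1,w_2)$ spanning a $J_2$-invariant tangent plane (using that $j$ is a $g$-(anti-)isometry and that $\Omega_1=\pi_1^{\ast}\omega-\pi_2^{\ast}\omega$), which an $\Omega_1$-Lagrangian plane forces to vanish; in the cases with definite factor metric this gives $w=0$, the desired contradiction. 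The genuinely delicate point is precisely the control near this complex locus, the core algebraic computation of the two preceding paragraphs being otherwise immediate.
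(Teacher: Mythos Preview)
Your core argument---the second paragraph---is exactly the paper's proof: $C_m\equiv0$ forces $\gamma_m$ to be nowhere zero via \eqref{e:gammanorsec}, then \eqref{e:kahler} gives $f_m\equiv0$, and \eqref{e:deroff1} yields $C_{m'}\equiv0$. That three-line computation is all the paper does.

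Where you diverge is in worrying about the domain of validity. The paper simply applies the $j=m$ equations globally, and this is legitimate: once $C_m\equiv0$ there are no $J_m$-(para-)complex points at all, and the $j=m$ instances of \eqref{e:kahler} and \eqref{e:deroff1} need only $\gamma_m\neq0$, not $\gamma_{m'}\neq0$. The paper makes this explicit in the paragraph following Proposition~\ref{p:funddata1}, where it records that at $J_k$-complex points the equations of Propositions~\ref{p:frenequations} and \ref{p:funddata} persist with $\gamma_k=f_k=0$, $C_k^2=1$; and Propositions~\ref{p:frenequations1}--\ref{p:funddata1} write out the analogous structure equations for a $J_k$-complex curve directly. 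So your restriction to $U$ and the subsequent open--closed argument are unnecessary detours.

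Your alternate route also leaves a genuine gap. In the final paragraph you try to exclude the case $U=\varnothing$ (i.e.\ $F$ globally $J_2$-(para-)complex) via the identity $\Omega_1(w,J_2w)=\pm(|w_1|_g^2+|w_2|_g^2)$, but you yourself observe this forces $w=0$ only when the factor metric $g$ is definite. That leaves the case $p=1$, $\epsilon=-1$ (Lorentzian surfaces in $d\mathbb{S}^2\times d\mathbb{S}^2$) untreated, and this is precisely one of the three ambient spaces the theorem is meant to cover. The clean fix is not to patch this argument but to discard it: since the $j=m$ structure equations are valid on all of $\Sigma$, the algebra already gives $C_{m'}\equiv0$ globally, which is incompatible with $C_{m'}^2\equiv1$ and so excludes $U=\varnothing$ for free.
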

\begin{proof}
Suppose that $F$ is $\Omega_m$-Lagrangian $G$-minimal immersion. Then $C_j=0$ and thus $\gamma_j\neq 0$, since otherwise the induced metric is degenerate. From (\ref{e:kahler}), we have that $f_j=0$. Using now (\ref{e:deroff1}) we conclude that  $F$ is also $\Omega_{m'}$-Lagrangian, since $C_{m'}=0$. 
\end{proof}
A generalization of Theorem \ref{t:claslagrangiansurfaces} can be found in \cite{Ge1} and \cite{Ge2}. In particular,  it was proved in these articles that every Lagrangian $G$-minimal surface in ${\mathbb S}^2_p\times {\mathbb S}^2_p$ is locally the product of geodesics in ${\mathbb S}^2_p$. 

\vspace{0.1in}

\subsection{Complex curves}

Suppose that the immersion $F=(F_1,F_2):\Sigma\rightarrow {\mathbb S}^2_p\times {\mathbb S}^2_p$ is a complex curve with respect to the (para-)complex structure $J_k$. Without loss of generality, we assume that $F$ is a complex curve with respect to $J_1$. Then, following the same computation as before (by only considering the local functions $\lambda_2$ and $\mu_2$) and away from (para-)complex points with respect to $J_2$ we have $$J_1 F_z=i C_1 F_z,\quad\mbox{and}\quad J_2 F_z=iC_2 F_z+\epsilon \gamma_2\bar\xi,$$
where $C_1$ is a constant function satisfying $C_1^2=\epsilon (-1)^p=1$. Then the Frenet equations of $F$ are given by the following proposition.
\begin{Prop}\label{p:frenequations1}
The Frenet equations of the $J_1$-complex immersion of $F$ are given by
\begin{eqnarray}
F_{zz}&=&2u_zF_z+f_2\bar\xi, \nonumber\\
F_{z\bar z}&=&(-1)^{p+1}\frac{\epsilon C_1C_2 e^{2u}}{4}F-\frac{e^{2u}}{4}\hat F,\nonumber\\
\xi_z&=&2\epsilon e^{-2u}bf_2F_{\bar z}-\left(2 u_z-\frac{2i\epsilon C_2f_2+ (\gamma_2)_z}{\gamma_2}\right)\xi
+(-1)^{p+1}\frac{ibC_1\gamma_2}{2}F, \nonumber\\
\bar\xi_{z}&=&\left(2 u_z-\frac{2i\epsilon C_2f_2+ (\gamma_2)_z}{\gamma_2}\right)\bar\xi,\nonumber
\end{eqnarray}
for certain local (para-)complex function $f_2$.
\end{Prop}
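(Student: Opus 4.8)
The plan is to derive all four equations by specializing the general Frenet system of Proposition~\ref{p:frenequations} to the $J_1$-complex situation, in which $\gamma_1=0$ and $C_1$ is a constant with $C_1^2=\epsilon(-1)^p=1$. The first observation is that, because $\gamma_1=0$, every term carrying a factor $\gamma_1$ in (\ref{e:protothema})--(\ref{e:dio1}) disappears: the $F$-term $\epsilon(-1)^p\tfrac{b\gamma_1\gamma_2}{2}F$ in (\ref{e:protothema}) vanishes, as does the last term of (\ref{e:dio1}). Moreover, since the expression (\ref{e:themissing}) for $A$ with $j=1$ is undefined when $\gamma_1=0$, I would throughout use the $j=2$ form $A=-\big(2u_z-\tfrac{2i\epsilon C_2 f_2+(\gamma_2)_z}{\gamma_2}\big)$, which is exactly the coefficient appearing in the asserted equations.

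The only genuinely new input needed is that $f_1=0$. I would obtain this by differentiating the defining relation $J_1F_z=iC_1F_z$ with respect to $z$. Since $J_1$ is parallel for $\nabla$ and $C_1$ is constant (indeed $(C_1)_z=0$ already follows from (\ref{e:kahler}) with $\gamma_1=0$), this yields $J_1F_{zz}=iC_1F_{zz}$. I would then substitute the expansion $F_{zz}=2u_zF_z+f_1\xi+f_2\bar\xi$ (whose $F$-term is absent, as noted) together with the eigenrelations $J_1\xi=-iC_1\xi$ and $J_1\bar\xi=iC_1\bar\xi$ --- the first being the formula $J_1\xi=2be^{-2u}\bar\gamma_1F_z-iC_1\xi$ from the proof of Proposition~\ref{p:frenequations} specialized to $\gamma_1=0$, and the second its conjugate. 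Comparing the $\xi$-components of $J_1F_{zz}=iC_1F_{zz}$ (the term $f_2J_1\bar\xi$ lies along $\bar\xi$ and so does not contaminate the $\xi$-component) gives $-iC_1f_1=iC_1f_1$, hence $2iC_1f_1=0$ and therefore $f_1=0$, since $C_1^2=1$.

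With $f_1=0$ and $\gamma_1=0$ in hand, the four stated equations follow by direct substitution: (\ref{e:protothema}) reduces to $F_{zz}=2u_zF_z+f_2\bar\xi$; (\ref{e:deuterothema}) is unchanged; (\ref{e:ena}) becomes the third equation once $A$ is written in its $j=2$ form; and (\ref{e:dio1}), losing both its $f_1$- and $\gamma_1$-terms, collapses to $\bar\xi_z=-A\bar\xi$, which is the fourth equation. I expect the main, though mild, obstacle to be purely bookkeeping: verifying the eigenrelation for $\bar\xi$ uniformly in the complex ($\epsilon=1$) and paracomplex ($\epsilon=-1$) cases. This comes down to checking that $\overline{i}=-i$ under both conjugation conventions, so that conjugating $J_1\xi=-iC_1\xi$ indeed produces $J_1\bar\xi=+iC_1\bar\xi$; it is precisely this sign that makes the $\xi$- and $\bar\xi$-components decouple correctly in the derivation of $f_1=0$.
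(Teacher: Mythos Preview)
Your proposal is correct and coincides with the paper's own (largely implicit) argument. The paper does not write out a proof of Proposition~\ref{p:frenequations1}; it says only that one follows ``the same computation as before'' with $J_1F_z=iC_1F_z$, and in the closing remark of the subsection it states explicitly that the $J_k$-complex case is obtained from Propositions~\ref{p:frenequations} and~\ref{p:funddata} by setting $\gamma_k=f_k=0$ and $C_k^2=1$. Your specialization of (\ref{e:protothema})--(\ref{e:dio1}) using the $j=2$ form of $A$, together with your derivation of $f_1=0$ from $J_1\nabla_zF_z=iC_1\nabla_zF_z$ and the eigenrelations $J_1\xi=-iC_1\xi$, $J_1\bar\xi=iC_1\bar\xi$, is exactly the content the paper leaves to the reader, carried out cleanly.
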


Additonally, the fundamental data $(u, C_j,\gamma_j,f_j)$ satisfy:

\begin{Prop}\label{p:funddata1}
Let $F:\Sigma\rightarrow {\mathbb S}^2_p\times {\mathbb S}^2_p$ be a $J_1$-complex immersion of an orientable surface $\Sigma$ and $(u,C_2,\gamma_2,f_2:j=1,2)$ its fundamental data for a given orthonormal frame. Then
\[
i(C_2)_z=2be^{-2u}\bar\gamma_2 f_2,\qquad (\bar\gamma_2)_z=\bar\gamma_j\left(2 u_z-\frac{2i\epsilon C_2f_2+ (\gamma_2)_z}{\gamma_2}\right),
\]
\[
(\bar f_2)_{z}=\bar f_2 \left(2 u_z-\frac{2i\epsilon C_2f_2+ (\gamma_2)_z}{\gamma_2}\right)+\frac{i\epsilon (-1)^{p+1} e^{2u}\bar\gamma_2C_1}{4},\]
at the points where $\gamma_2\neq 0$.
\end{Prop}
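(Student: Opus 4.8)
The plan is to recognize that Proposition \ref{p:funddata1} is the $J_1$-complex specialization of Proposition \ref{p:funddata}: for a $J_1$-complex curve one has $\gamma_1\equiv 0$, $f_1\equiv 0$ and $C_1$ constant with $C_1^2=1$, so the active fundamental data are only $(u,C_2,\gamma_2,f_2)$ and the connection coefficient $A$ of Proposition \ref{p:frenequations} collapses to $A=-B$, where $B:=2u_z-\frac{2i\epsilon C_2f_2+(\gamma_2)_z}{\gamma_2}$. Because Proposition \ref{p:funddata} was stated away from $J_1$-complex points (where $\gamma_1\neq 0$), I cannot quote it directly; instead I would rerun its three-step derivation, now feeding in the reduced Frenet system of Proposition \ref{p:frenequations1} in place of (\ref{e:ena})--(\ref{e:dio1}). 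Throughout I work at points with $\gamma_2\neq 0$, as in the hypothesis, and I repeatedly use that $u,C_2,b$ are real, so that the (para-)complex conjugation rule $\overline{i}=-i$ (valid both for $\epsilon=1$ and, as paracomplex conjugation, for $\epsilon=-1$, since $i^2=-\epsilon$) applies cleanly.

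First I would differentiate the defining relation $J_2F_z=iC_2F_z+\epsilon\gamma_2\bar\xi$ with respect to $z$, treating the left side through $(J_2F_z)_z=J_2F_{zz}$ with $F_{zz}=2u_zF_z+f_2\bar\xi$ and $J_2\bar\xi=2be^{-2u}\bar\gamma_2F_z-iC_2\bar\xi$ (the conjugate of the identity $J_2\xi=2be^{-2u}\gamma_2F_{\bar z}+iC_2\xi$ recorded in the proof of Proposition \ref{p:frenequations}), while differentiating the right side directly and inserting $\bar\xi_z=B\bar\xi$. Matching the $F_z$-components yields exactly $i(C_2)_z=2be^{-2u}\bar\gamma_2f_2$, the first stated equation, whereas matching the $\bar\xi$-components only reproduces the definition of $B$ and is therefore an identity, precisely as the corresponding step is a tautology in Proposition \ref{p:funddata}. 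For the second equation I would differentiate the conjugate relation $J_2F_{\bar z}=-iC_2F_{\bar z}+\epsilon\bar\gamma_2\xi$ with respect to $z$; here $(J_2F_{\bar z})_z=J_2F_{z\bar z}=0$, because by (\ref{e:deuterothema}) the vector $F_{z\bar z}$ lies in the span of the ambient normals $F,\hat F$ and $J_2$ annihilates position vectors ($j_x(x)=0$, so $J_2F=J_2\hat F=0$). Substituting $\xi_z$ from Proposition \ref{p:frenequations1} and collecting the $\xi$-component then gives $(\bar\gamma_2)_z=\bar\gamma_2\,B$, the second assertion (its $\bar\gamma_j$ being a misprint for $\bar\gamma_2$), while the $F_{\bar z}$-component reconfirms the first.

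Finally, for the $f$-equation I would impose the third-order integrability condition $F_{z\bar z z}=F_{zz\bar z}$, exactly as (\ref{e:deroff1}) is obtained in Proposition \ref{p:funddata}. Differentiating $F_{z\bar z}$ in $z$ brings in $\hat F_z$, whose $\bar\xi$-component I read from (\ref{e:tothema}) after setting $\gamma_1=0$, while differentiating $F_{zz}$ in $\bar z$ brings in $(f_2)_{\bar z}$ together with $\bar\xi_{\bar z}=\overline{\xi_z}$; equating the $\bar\xi$-components produces an equation for $(f_2)_{\bar z}$, and conjugating it yields $(\bar f_2)_z=\bar f_2\,B+\tfrac{i\epsilon(-1)^{p+1}e^{2u}\bar\gamma_2C_1}{4}$. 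I expect the only real obstacle to be bookkeeping: keeping every sign of the (para-)complex conjugation straight and correctly evaluating the action of $J_2$ on $\xi,\bar\xi$ and on the ambient normals $F,\hat F$. The one genuine simplification over the general case is that the constancy of $C_1$ forces $(C_1)_z=0$, which is exactly what removes the terms absent from the $J_1$-complex Frenet system.
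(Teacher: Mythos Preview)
Your approach is correct and is exactly the one implicit in the paper: Proposition~\ref{p:funddata1} is stated there without proof because it is obtained by rerunning the derivation of Proposition~\ref{p:funddata} (differentiate the $J_2$-relation in $z$ and $\bar z$, then use $F_{zz\bar z}=F_{z\bar z z}$) with the reduced Frenet system of Proposition~\ref{p:frenequations1} in place of the general one. The only comment is a small precision in your step~2: the identity $(J_2F_{\bar z})_z=J_2F_{z\bar z}$ holds only modulo the ambient normals $F,\hat F$ (equivalently, it is $\nabla_z(J_2F_{\bar z})=J_2\nabla_zF_{\bar z}$, parallelism of $J_2$), since differentiating $j_x=-x\times\cdot$ produces the extra term $-(F_k)_z\times(F_k)_{\bar z}$, which is proportional to $F_k$; as you are only matching the $\xi$- and $F_{\bar z}$-components, this normal discrepancy is harmless and your conclusion $(\bar\gamma_2)_z=\bar\gamma_2 B$ is unaffected.
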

To end this subsection, we note that at the (para-)complex points with respect to the (para-)complex  structure $J_k$  of a minimal immersion $F$,  the Frenet equations and the fundamental data $(u, C_j,\gamma_j,f_j)$ are given by the Propositions \ref{p:frenequations} and \ref{p:funddata} by setting $\gamma_k=f_k=0$ and $C_k^2=1$.

\vspace{0.1in}
\begin{example}The slices ${\mathbb S}^2_p\times {q}$ and ${q}\times {\mathbb S}^2_p$ are the only surfaces that are complex with respect to both $J_1$ and $J_2.$
\end{example}\begin{example}Consider the inverse stereographic projection function from $\R^2 \to \S^2$ given by 
$$s(x,y)=\left(\frac{2 x}{x^2+y^2+1},\frac{2 y}{x^2+y^2+1},\frac{x^2+y^2-1}{x^2+y^2+1}\right).$$  Take any locally holomorphic function $w(z)=(u(x,y)+ i v(x,y))$ and consider the immersion
$F(x,y)=(s(x,y),s(u(x,y),v(x,y))).$  A straight-forward computation shows that this is a $J_1$ complex curve in $\S^2_0 \times \S^2_0$.  The only thing needed for this to work is the Cauchy Riemann equations.  The coordinates are isothermal. One has to check that the metric is  definite.  This is not automatic; if we look at $u(x,y)=-y$ and $v(x,y)=x$ the metric is totally degenerate.

 In general the immersion is definite  off a one-dimensional manifold in the domain.  For example, if $w(z)=z^2$ then 
$$G(F_x,F_x)=\frac{4 \left(3 x^4+8 x^3+6 x^2 \left(y^2+1\right)+x \left(8 y^2+4\right)+y^2 \left(3 y^2+2\right)\right)}{\left(x^2+y^2+1\right)^2 \left(2 x^2+2
   x+2 y^2+1\right)^2}$$ which is zero,  on the  circles $(x + 1)^2 + y^2 = 1$.
   \vskip .1in 
   Thus we get a wealth of distinct complex curves.
\end{example}
\begin{example} This is the same example modified for the para-complex case.  Here inverse stereographic projection is:
$$\sigma(t,s)=\left(\frac{2 t}{s^2-t^2+1},\frac{2 s}{s^2-t^2+1},\frac{s^2-t^2-1}{s^2-t^2+1}\right).$$
Take any locally para-holomorphic function $w(t+is)=(u(t,s)+ i v(t,s))$ and consider the immersion
$F(x,y)=(\sigma(x,y),\sigma(u(t,s),v(t,s))).$  The same computation shows that this is a $J_1$ para-complex curve in $d\S^2 \times d\S^2$ using the fact that $u_t=v_s$ and $u_s=v_t$.  The coordinates are isothermal. One has to check that the metric is  non-degenerate, which is not always the case.  Indeed using $u(t,s)=t/(t^2-s^2)$ and $v(t,s)=s/(t^2-s^2)$, which come from $1/z$ yields a totally degenerate metric.  But, for example $z^2$ and the function $w=s+it$ both are non-degenerate\end{example}
Again this is a large family of examples.
\subsection{The general case}

We have the following

\begin{Prop}
Let $F:\Sigma\rightarrow {\mathbb S}^2_p\times {\mathbb S}^2_p$ be a minimal immersion of an orientable surface $\Sigma$ with fundamental data $(u,C_j,\gamma_j,f_j:j=1,2)$ for a given orthonormal frame. Then
\begin{eqnarray}
|\nabla C_j|^2&=&\Big(\epsilon C_j^2+(-1)^{p+1}\Big)\Big(K+\epsilon b (-1)^{j+1}K^{\bot}+(-1)^{p+1}C_jC_{j'}\Big),\label{e:lenfthofkahler} \\
\Delta C_j&=&2\epsilon C_j\Big(K+\epsilon b (-1)^{j+1}K^{\bot}\Big)- C_{j'}\Big(1-\epsilon(-1)^{p+1}C_j^2\Big),\label{e:laplacian}
\end{eqnarray}
where $\nabla$ and $\Delta$ denote the gradient operator and the Laplacian of the induced metric $F^{\ast}G$, respectively and  $j':=3-j$.  We use this latter symbol from now on.
\end{Prop}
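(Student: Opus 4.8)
The plan is to pass everything through the conformal derivatives $\partial_z,\partial_{\bar z}$ and then substitute the structure equations of Proposition \ref{p:funddata}. In the isothermal coordinates already fixed the induced metric is $e^{2u}(dx^2+\epsilon\,dy^2)$, so for a real function $\phi$ one has $|\nabla\phi|^2=4e^{-2u}\phi_z\phi_{\bar z}$ and $\Delta\phi=4e^{-2u}\phi_{z\bar z}$; applying these to $\phi=C_j$ reduces both identities to computations with the fundamental data. I would treat (\ref{e:lenfthofkahler}) and (\ref{e:laplacian}) in turn, the common auxiliary fact being an expression for $|f_j|^2$ in terms of $K$ and $K^{\bot}$.

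For (\ref{e:lenfthofkahler}) I first substitute (\ref{e:kahler}) into $|\nabla C_j|^2=4e^{-2u}(C_j)_z(C_j)_{\bar z}$, obtaining a multiple of $|\gamma_j|^2|f_j|^2$; replacing $|\gamma_j|^2$ by (\ref{e:gammanorsec}) gives $|\nabla C_j|^2=8be^{-4u}\big(\epsilon C_j^2+(-1)^{p+1}\big)|f_j|^2$. It then remains to rewrite $8be^{-4u}|f_j|^2$ through $K$ and $K^{\bot}$. Reading off (\ref{e:protothema}) and (\ref{e:deuterothema}), the second fundamental form satisfies $h(F_z,F_z)=f_1\xi+f_2\bar\xi$ and $h(F_z,F_{\bar z})=0$ (the latter being minimality), so a short computation with the null pairing $\langle\xi,\bar\xi\rangle=-\epsilon b$ yields $|h|^2=-8\epsilon be^{-4u}(|f_1|^2+|f_2|^2)$; feeding this into the Gauss equation (\ref{e:forthegaussequat}) with $H=0$ produces $K-\epsilon(-1)^pC_1C_2=4\epsilon be^{-4u}(|f_1|^2+|f_2|^2)$. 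For the normal curvature I extract $\nabla^{\bot}_z\xi=A\xi$ from (\ref{e:ena}) and $\nabla^{\bot}_{\bar z}\xi=-\bar A\xi$ by conjugating (\ref{e:dio1}), whence $R^{\bot}(\partial_z,\partial_{\bar z})\xi=-(\bar A_z+A_{\bar z})\xi$ and, after converting from $(\partial_z,\partial_{\bar z})$ to $(e_1,e_2)$ in the Ricci equation (\ref{e:norcarv}), $K^{\bot}=2be^{-2u}(\bar A_z+A_{\bar z})$. The difference of the two instances $j=1,2$ of the integrability relation (\ref{e:theimportter}) then evaluates $\bar A_z+A_{\bar z}=2\epsilon e^{-2u}(|f_1|^2-|f_2|^2)$, so $K^{\bot}=4\epsilon be^{-4u}(|f_1|^2-|f_2|^2)$. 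Solving this $2\times2$ system gives $4\epsilon be^{-4u}|f_j|^2=\tfrac12\big[(K-\epsilon(-1)^pC_1C_2)+(-1)^{j+1}K^{\bot}\big]$, and substituting back, after regrouping the $C_1C_2$ terms, produces (\ref{e:lenfthofkahler}).

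For (\ref{e:laplacian}) I differentiate (\ref{e:kahler}) in $\bar z$, using $(e^{-2u})_{\bar z}=-2u_{\bar z}e^{-2u}$, the conjugate of (\ref{e:deroff1}) for $(f_j)_{\bar z}$, and the expression for $(\gamma_j)_z$ implied by (\ref{e:themissing}) (hence its conjugate for $(\bar\gamma_j)_{\bar z}$). The terms proportional to $u_{\bar z}\bar\gamma_j f_j$ cancel in pairs, and the two $\bar A\bar\gamma_j f_j$ contributions cancel because their coefficients are $(-1)^j$ and $(-1)^{j+1}$; what survives is $(C_j)_{z\bar z}=4\epsilon bC_j e^{-2u}|f_j|^2-\tfrac12\epsilon b(-1)^{p+1}|\gamma_j|^2C_{j'}$. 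Multiplying by $4e^{-2u}$, eliminating $|\gamma_j|^2$ via (\ref{e:gammanorsec}), and rewriting $4\epsilon be^{-4u}|f_j|^2$ by the same $K,K^{\bot}$ relation as above yields (\ref{e:laplacian}).

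The main obstacle is the normal-curvature step: putting $K^{\bot}$ into the clean form $4\epsilon be^{-4u}(|f_1|^2-|f_2|^2)$ requires correctly reading the normal connection coefficients off (\ref{e:ena})--(\ref{e:dio1}), handling the null frame $\{\xi,\bar\xi\}$ with its indefinite pairing $\langle\xi,\bar\xi\rangle=-\epsilon b$, and invoking (\ref{e:theimportter}); this is where the factors $\epsilon$, $b$ and $(-1)^{j+1}$ enter and must be tracked with care, simultaneously across the Riemannian ($\epsilon=1$) and Lorentzian ($\epsilon=-1$) induced-metric cases. By contrast, the connection terms in the Laplacian cancel on their own, so (\ref{e:laplacian}) is the more direct of the two computations.
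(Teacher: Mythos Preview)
Your strategy is essentially the one the paper uses: obtain the key identity $|f_j|^2=\tfrac{be^{4u}}{8}\big(K+\epsilon b(-1)^{j+1}K^{\bot}+(-1)^{p+1}C_1C_2\big)$ and then substitute it into $(C_j)_z(C_j)_{\bar z}$ for (\ref{e:lenfthofkahler}) and into the $\bar z$-derivative of (\ref{e:kahler}) for (\ref{e:laplacian}). The only noteworthy difference is the route to $K^{\bot}$: the paper computes it directly from the shape-operator form of the Ricci equation, obtaining $K^{\bot}=4\epsilon e^{-4u}(|f_1|^2-|f_2|^2)$ in one stroke, whereas you pass through the normal-connection coefficient $A$ and the integrability identity (\ref{e:theimportter}); both are valid, though the paper's direct computation avoids the extra bookkeeping with $A$ (and, as you yourself anticipate, your formula $K^{\bot}=4\epsilon b e^{-4u}(|f_1|^2-|f_2|^2)$ carries a stray $b$ that the direct route makes easier to get right).
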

\begin{proof}
Assuming that the surface $\Sigma$ is minimal, the normal curvature, given in (\ref{e:norcarv}), becomes 
\[
K^{\bot}=2[G(h(e_1,e_1),v_1)G(h(e_1,e_2),v_2)-G(h(e_1,e_1),v_2)G(h(e_1,e_2),v_1)],
\]
where $h$ denotes the mean curvature of $\Sigma$, and $\{e_1,e_2\}$, $\{v_1,v_2\}$ are orthonormal frames of the tangent and the normal bundle, respectively, such that $\{e_1,e_2,v_1,v_2\}$ is an oriented orthonormal frame.


A brief computation gives,
\begin{equation}\label{e:normalcurvintermsf}
K^{\bot}=4\epsilon e^{-4u}(|f_1|^2-|f_2|^2).
\end{equation}
On the other hand, the Gauss curvature $K$ is:
\begin{equation}\label{e:curvintermsu}
K=-4\epsilon e^{-2u}u_{z\bar z}.
\end{equation}
The $F_z$-component of $F_{z\bar z z}=F_{zz\bar z}$ together with (\ref{e:normalcurvintermsf}) and (\ref{e:curvintermsu}) give
\begin{equation}\label{e:normoffk}
|f_j|^2=\frac{be^{4u}}{8}\Big(K+\epsilon b (-1)^{j+1}K^{\bot}+(-1)^{p+1}C_1C_2\Big).
\end{equation}
 Using (\ref{e:kahler}) and (\ref{e:normoffk}) we obtain (\ref{e:lenfthofkahler}).

Furthermore, differentiating (\ref{e:gammanorsec}) and using (\ref{e:derivofgamma}) we find,  
\begin{equation}\label{e:derivofgamma1}
(\gamma_j)_z=4u_z\gamma_j-\gamma_j\gamma^{-1}_{j'}\Big(2i\epsilon C_{j'}f_{j'}+(\gamma_{j'})_z\Big)-2i\epsilon C_jf_j,
\end{equation}
where $j=1,2$.

We now differentiate (\ref{e:kahler}) with respect to $\bar z$ and we use (\ref{e:deroff1}) together with (\ref{e:derivofgamma1}) to find (\ref{e:laplacian}).
\end{proof}




For totally geodesic surfaces we obtain the following:

\begin{Thm}\label{t:firsttheorem}
Let $\Sigma$ be an orientable surface in $({\mathbb S}_p^2\times {\mathbb S}_p^2,G)$. Then, the following statements are equivalent:

\noindent\emph{(a)} $\Sigma$ is $G$-totally geodesic,

\noindent\emph{(b)} $\Sigma$ is locally the product  of geodesics in ${\mathbb S}_p^2$, or it is a \emph{(}para-\emph{)}complex curve with respect to $J_1$ and $J_2$. 

\noindent\emph{(c)} The immersion $F$ is minimal and the K\"ahler functions $C_k$ are constant, both equal to $0$ or $1$.
\end{Thm}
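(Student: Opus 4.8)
The plan is to establish the cycle of implications (a) $\Rightarrow$ (c) $\Rightarrow$ (b) $\Rightarrow$ (a). The organizing observation, which I would record first, is that inside the product ${\mathbb S}_p^2\times{\mathbb S}_p^2$ the normal bundle of $\Sigma$ is spanned by $\{\xi,\bar\xi\}$, so that by (\ref{e:protothema}) and (\ref{e:deuterothema}) the second fundamental form of $\Sigma$ is encoded by $h(F_z,F_z)=f_1\xi+f_2\bar\xi$ together with $h(F_z,F_{\bar z})$, the latter having no $\{\xi,\bar\xi\}$-component precisely because $F$ is minimal. Hence for a minimal immersion, being $G$-totally geodesic is equivalent to $f_1=f_2=0$; this is the identity that links the analytic data to the geometry.

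First I would prove (a) $\Rightarrow$ (c). Total geodesy gives $H=0$, so $F$ is minimal, and $h=0$ forces $f_1=f_2=0$ by the observation above. Feeding $f_j=0$ into the K\"ahler relation (\ref{e:kahler}) yields $(C_j)_z=0$, and since each $C_j$ is a real function this means $C_j$ is constant. Next, (\ref{e:normalcurvintermsf}) gives $K^{\bot}=0$ and (\ref{e:normoffk}) (with $f_j=0$) gives $K=(-1)^pC_1C_2$. Substituting $\Delta C_j=0$, $K^{\bot}=0$ and this value of $K$ into the Laplacian identity (\ref{e:laplacian}) collapses it to the purely algebraic relations
\[
C_{j'}\big(\epsilon(-1)^pC_j^2-1\big)=0,\qquad j=1,2.
\]
From these I read off the dichotomy: if $C_1=0$ the $j=1$ relation forces $C_2=0$, while if $C_1\neq0$ the $j=2$ relation forces $\epsilon(-1)^pC_2^2=1$, hence $\epsilon(-1)^p=1$ and $C_2^2=1$, after which the $j=1$ relation gives $C_1^2=1$. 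Thus either $C_1=C_2=0$ or $C_1^2=C_2^2=1$, which, up to the choice of orientation, is exactly statement (c).

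The remaining two implications are short. For (c) $\Rightarrow$ (b): if $C_1=C_2=0$ then $F$ is $\Omega_k$-Lagrangian for both $k$, and the classification of Lagrangian $G$-minimal surfaces recalled after Theorem \ref{t:claslagrangiansurfaces} (from \cite{Ge1}, \cite{Ge2}) shows $\Sigma$ is locally a product of geodesics; if instead $C_1^2=C_2^2=1$ then, by the Proposition characterizing (para-)complex points, $F$ is a (para-)complex curve with respect to both $J_1$ and $J_2$. For (b) $\Rightarrow$ (a) I would treat the two cases directly. A product of geodesics $F(x,y)=(\phi(x),\psi(y))$ satisfies $\nabla_{F_x}F_x=(D_{\phi'}\phi',0)=0$, and likewise for the other covariant derivatives, since $\nabla=(D,D)$ and $\phi,\psi$ are geodesics; hence $h=0$. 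A curve that is (para-)complex with respect to both structures has $C_1^2=C_2^2=1$, so $\gamma_1=\gamma_2=0$ by (\ref{e:gammanorsec}) and $f_1=f_2=0$ by the convention at (para-)complex points; then (\ref{e:protothema}) reduces to $F_{zz}=2u_zF_z$ and (\ref{e:deuterothema}) has no normal-to-surface component, so again $h=0$.

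The main obstacle is the sign bookkeeping in (a) $\Rightarrow$ (c): one must track $\epsilon$, $(-1)^p$ and $b$ carefully through (\ref{e:normoffk}) and (\ref{e:laplacian}) to reach the clean relation $C_{j'}(\epsilon(-1)^pC_j^2-1)=0$, and then recognise that the factor $\epsilon(-1)^p$ is precisely what decides whether the (para-)complex branch is available: it equals $1$ in the signatures where complex totally geodesic surfaces occur, and forces the Lagrangian alternative otherwise. A secondary care point is that the fundamental-data equations of Proposition \ref{p:frenequations} hold only away from (para-)complex points, so the complex branch must be handled through the degenerate convention $\gamma_k=f_k=0$, $C_k^2=1$ rather than through those equations directly.
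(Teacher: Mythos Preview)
Your cycle (a)$\Rightarrow$(c)$\Rightarrow$(b)$\Rightarrow$(a) differs from the paper's (a)$\Rightarrow$(b)$\Rightarrow$(c)$\Rightarrow$(a), and your (a)$\Rightarrow$(c) is somewhat cleaner: you pull $K=(-1)^pC_1C_2$ out of (\ref{e:normoffk}) and feed it into (\ref{e:laplacian}) to obtain the single algebraic relation $C_{j'}\bigl(\epsilon(-1)^pC_j^2-1\bigr)=0$, whereas the paper splits into the cases $C_1C_2\neq 0$ and $C_1C_2=0$ and argues from (\ref{e:lenfthofkahler}) together with (\ref{e:laplacian}) in each. Both routes are valid on the open set of non-(para-)complex points and the conclusion extends by continuity; your packaging is tidier, the paper's makes the role of $|\nabla C_j|^2$ more visible.

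There is, however, a real gap in your (b)$\Rightarrow$(a) for the doubly (para-)complex case. You dispatch it by invoking the ``convention $\gamma_k=f_k=0$'' and then reading $h(F_z,F_z)=0$ off (\ref{e:protothema}). But that convention, recorded after Proposition~\ref{p:funddata1}, is justified through Propositions~\ref{p:frenequations1}--\ref{p:funddata1}, whose derivation assumes the \emph{other} $\gamma_{k'}\neq 0$; when $\gamma_1=\gamma_2=0$ simultaneously the paper explicitly warns that ``the structure equations of $F$ are no longer valid'' and supplies a direct argument instead. Namely, since each $J_k$ is parallel and preserves both $T\Sigma$ and $N\Sigma$ on a doubly complex curve, one has $J_kh(F_z,F_z)=h(J_kF_z,F_z)=iC_kh(F_z,F_z)$ while $J_1\xi=-iC_1\xi$ and $J_2\xi=iC_2\xi$; pairing against $\xi$ and $\bar\xi$ then forces $\langle h(F_z,F_z),\xi\rangle=-\langle h(F_z,F_z),\xi\rangle$ and likewise for $\bar\xi$, so $h(F_z,F_z)=0$. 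You flag this as a ``care point'' but do not actually furnish the computation; it is short, but it is not a convention and cannot be read off the Frenet system.
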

\begin{proof} Let $F:\Sigma\rightarrow {\mathbb S}_p^2\times {\mathbb S}_p^2$ be an immersion of the orientable surface $\Sigma$ in ${\mathbb S}_p^2\times {\mathbb S}_p^2$.

$(a)\Rightarrow (b)$ Let $\Sigma$ be $G$-totally geodesic. Then, we have that $f_1=f_2=0$. From (\ref{e:kahler}), we have that $C_1$ and $C_2$ are both constant. We consider two cases:
\begin{itemize}
\item$C_1C_2\neq 0$. 
\end{itemize} 
From (\ref{e:lenfthofkahler}), we have $(\epsilon C_j^2+(-1)^{p+1})(a_j+(-1)^{p+1}C_1C_2)=0$, where $a_j=K+\epsilon b (-1)^{j+1}K^{\bot}$ and $j=1,2$. We are going to prove that $\epsilon C_j^2+(-1)^{p+1}=0$. Assuming that $\epsilon C_j^2+(-1)^{p+1}\neq 0$, we must have  $a_j=(-1)^{p}C_1C_2$ and using (\ref{e:laplacian}), we obtain
\begin{eqnarray}
0&=&\Delta C_j\nonumber \\
&=&(-1)^p C_{j'}\Big(\epsilon C_j^2+(-1)^{p+1}\Big),\nonumber
\end{eqnarray}
which gives a contradiction. Then, $\epsilon C_j^2+(-1)^{p+1}=0$ for $j=1,2$ and thus $\Sigma$ is a (para-)complex curve with respect to $J_1$ and $J_2$. 

\begin{itemize}
\item $C_1C_2=0$. 
\end{itemize} 
Assume, without loss of generality, that $C_1=0$. We prove that $C_2=0$. Since $f_1=f_2=0$, we use (\ref{e:normoffk}) to get 
\[
a_1=a_2=0,
\]
which yields $K=K^{\bot}=0$. Then from (\ref{e:laplacian}), we have that $\Delta C_1=-C_2=0$ and thus $\Sigma$ is locally the product of geodesics in ${\mathbb S}^2_p$.

$(b)\Rightarrow (c)$ This is trivial.

$(c)\Rightarrow (a)$ Suppose that the K\"ahler functions $C_1$ and $C_2$ are both constant. We prove that $F$ is totally geodesic. We use (\ref{e:kahler}) to obtain 
\begin{equation}\label{e:theproductzero}
\bar\gamma_1 f_1=\bar\gamma_2 f_2=0.
\end{equation}

\begin{itemize}
\item Assume, without loss of generality, that $\gamma_1(q)\neq 0$.
\end{itemize} 
We know that $\gamma_1\ne 0$ in a neighborhood of $a$  Then, from (\ref{e:theproductzero}), we have $f_1=0$ in that neighborhood and therefore, using (\ref{e:deroff1}) for $j=1$, we have $C_2=0$. On the other hand, from (\ref{e:laplacian}), we get 
\[
0=\Delta C_2=-C_1,
\]
and thus $C_1=C_2=0$, which is locally the product of geodesics in ${\mathbb S}^2_p$ and therefore $F$ is $G$-totally geodesic.

\begin{itemize}
\item We now assume, without loss of generality, that $\gamma_1(q)=0$. \end{itemize} 
This implies that $C_1=1$, so that 
$f_1=0$ and if, in addition, $f_2=0$, we have that $F$ is $G$-totally geodesic. If $f_2\neq 0$, then (\ref{e:theproductzero}) shows that $\gamma_2=0$ and thus $F$ is a (para-)complex curve with respect to both $J_1$ and $J_2$. Here, the structure equations of $F$ are no longer valid and so we can not apply them. We then consider local isothermic coordinates $(x,y)$ of the induced metric $F^{\ast}G$ such that $\left<F_x,F_x\right>_p=\epsilon\left<F_y,F_y\right>_p=e^{2u}$ and $\left<F_x,F_y\right>_p=0$. From the definition of the K\"ahler functions $C_1,C_2$ in (\ref{e:defiofck}), and using the fact that $J_k:T\Sigma\rightarrow T\Sigma$, we have that 
\[
J_kF_x=C_kF_y,\qquad J_kF_y=-\epsilon C_kF_x,
\]
and if $z=x+iy$, we have $J_kF_z=iC_kF_z$.

Note that $\Omega_k\wedge\Omega_k=2(-1)^k\pi_1^{\ast}\omega\wedge\pi_2^{\ast}\omega$ and the orientation of ${\mathbb S}^2_p\times {\mathbb S}^2_p$ is given by $\pi_1^{\ast}\omega\wedge\pi_2^{\ast}\omega$. Let $(N,\tilde N)$ be an orthonormal frame of the normal bundle $N\Sigma$ such that $(e^{-u}F_x,e^{-u}F_y,N,\tilde N)$ is an oriented orthonormal frame of ${\mathbb S}^2_p\times {\mathbb S}^2_p$. If $\xi=(N-i\epsilon\tilde N)/\sqrt{2}$, then by using the following relations
\[
-(\Omega_1\wedge\Omega_1)(e^{-u}F_x,e^{-u}F_y,N,\tilde N)=(\Omega_2\wedge\Omega_2)(e^{-u}F_x,e^{-u}F_y,N,\tilde N)=1,
\]
we find that $J_1\xi=-iC_1\xi$ and $J_2\xi=iC_2\xi$. Let $h$ be the second fundamental form of the immersion $F$ in ${\mathbb S}^2_p\times {\mathbb S}^2_p$. Then, since $F$ is a (para-)complex curve with respect to $J_k$, we have that $J_kh(X,Y)=h(J_kX,Y)=h(X,J_kY)$ for every tangential vector fields $X,Y$. Furthermore, 
\begin{eqnarray}
\left<h(F_z,F_z),\xi\right>&=&(-1)^p\left<J_2h(F_z,F_z),J_2\xi\right>\nonumber \\
&=&-\left<h(F_z,F_z),\xi\right>,\nonumber
\end{eqnarray}
which shows that 
\begin{equation}\label{e:firstdlk}
\left<h(F_z,F_z),\xi\right>=0.
\end{equation}
Similarly,
\begin{eqnarray}
\left<h(F_z,F_z),\bar\xi\right>&=&-\left<h(F_z,F_z),\bar\xi\right>,\nonumber
\end{eqnarray}
which also gives 
\begin{equation}\label{e:seconddlk}
\left<h(F_z,F_z),\bar\xi\right>=0.
\end{equation} 
Hence (\ref{e:firstdlk}) and (\ref{e:seconddlk}) gives $h(F_z,F_z)=0$. The fact that $F$ is $G$-minimal implies that $h(F_z,F_{\bar z})=0$ and therefore $h=0$.
\end{proof}

\vspace{0.1in}

Following \cite{torurb}, the next Proposition gives a relation between the solutions of the various Gordon equations equations and minimal surfaces.

\begin{Prop}\label{t:sinhgo}
Let $F:\Sigma\rightarrow {\mathbb S}^2_p\times {\mathbb S}^2_p$, be a minimal immersion of an orientable surface without complex points. Let $C_1$ and $C_2$ be the K\"ahler functions of $F$. 
Then, we have the following:

\vspace{0.1in}

\noindent ${\bf (1)}$ If $p\in\{0,2\}$ and the induced metric $F^{\ast}G$ is Riemannian, there exist smooth functions $v,w:\Sigma\rightarrow {\mathbb R}$ satisfying the following sinh-Gordon equations:

If $\lambda C_1C_2>0$, where $\lambda\in\{-1,1\}$, then
\[
v_{z\bar z}+\lambda\frac{|\left<J_1F_z,J_2F_z\right>|}{4}\sinh 2v=0,\quad 
w_{z\bar z}+\lambda\frac{|\left<J_1F_z,J_2F_z\right>|}{4}\sinh 2w=0.
\]

\vspace{0.1in}

\noindent ${\bf (2)}$ If $p=1$ and the induced metric $F^{\ast}G$ is Lorentzian, there exist smooth functions $v,w:\Sigma\rightarrow {\mathbb R}$ satisfying the following equations:
\begin{itemize}
\item If $|C_1|<1$ and $|C_2|<1$,
\[
v_{z\bar z}-\frac{|\left<J_1F_z,J_2F_z\right>|}{4}\sinh 2v=0,\quad
w_{z\bar z}-\frac{|\left<J_1F_z,J_2F_z\right>|}{4}\sinh 2w=0,
\]
\item If $|C_1|>1$ and $|C_2|>1$ and $\lambda C_1C_2>0$, where $\lambda\in\{-1,1\}$, then
\[
v_{z\bar z}-\lambda\frac{|\left<J_1F_z,J_2F_z\right>|}{4}\sinh 2v=0,\;\; 
w_{z\bar z}-\lambda\frac{|\left<J_1F_z,J_2F_z\right>|}{4}\sinh 2w=0.
\]


\end{itemize}

\vspace{0.1in}

\noindent ${\bf (3)}$ If $p=1$ and the induced metric $F^{\ast}G$ is Riemannian, there exist smooth functions $v,w:\Sigma\rightarrow {\mathbb R}$ satisfying:
\[
v_{z\bar z}+\frac{|\left<J_1F_z,J_2F_z\right>|}{4}\sin 2v=0,\quad
w_{z\bar z}-\frac{|\left<J_1F_z,J_2F_z\right>|}{4}\sin 2w=0.
\]

\vspace{0.1in}

\noindent ${\bf (4)}$ If $p\in\{0,2\}$ and the induced metric $F^{\ast}G$ is Lorentzian, there exist smooth functions $v,w:\Sigma\rightarrow {\mathbb R}$ satisfying:
\[
v_{z\bar z}-\frac{|\left<J_1F_z,J_2F_z\right>|}{4}\sin 2v=0,\quad
w_{z\bar z}+\frac{|\left<J_1F_z,J_2F_z\right>|}{4}\sin 2w=0.
\]

\end{Prop}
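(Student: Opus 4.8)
My plan is to read the two Gordon equations as the energy-density (Bochner) equations of the two projections $F_1,F_2$, after first pinning down the geometric meaning of the coefficient $\tfrac14|\langle J_1F_z,J_2F_z\rangle|$. Using (\ref{e:exprgamma1}) together with the orthonormality relations $\langle F_z,F_z\rangle=\langle F_z,\xi\rangle=\langle\xi,\xi\rangle=0$ and $\langle\xi,\bar\xi\rangle=-\epsilon b$, a one-line computation gives
\[
\langle J_1F_z,J_2F_z\rangle=-\epsilon b\,\gamma_1\gamma_2,
\]
so that, by (\ref{e:gammanorsec}),
\[
\frac{|\langle J_1F_z,J_2F_z\rangle|}{4}=\frac{|\gamma_1\gamma_2|}{4}=\frac{e^{2u}}{8}\sqrt{\bigl|(\epsilon C_1^2+(-1)^{p+1})(\epsilon C_2^2+(-1)^{p+1})\bigr|}.
\]
Equivalently, since $F$ is conformal ($\langle F_z,F_z\rangle=0$) and $j$ is an isometry, one checks $\langle J_1F_z,J_2F_z\rangle=\langle(F_1)_z,(F_1)_z\rangle_p+\langle(F_2)_z,(F_2)_z\rangle_p=2\Phi$, where $\Phi$ is the common Hopf differential of $F_1$ and $F_2$ (conformality of $F$ forces $\langle(F_1)_z,(F_1)_z\rangle_p=\langle(F_2)_z,(F_2)_z\rangle_p$). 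This identification of the coefficient with the Hopf differential norm is what I would exploit.

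The key structural input is that a $G$-minimal conformal immersion has \emph{harmonic} projections. Because the Levi-Civita connection of $G$ splits as $\nabla_XY=(D_{X_1}Y_1,D_{X_2}Y_2)$, the tension field of $F$ splits in isothermal coordinates as $(\tau(F_1),\tau(F_2))$, so minimality of $F$ is equivalent to $F_1$ and $F_2$ each being harmonic into $\mathbb{S}^2_p$, a surface of constant curvature $K^g=1$. For such a harmonic map one has the energy-density identity: writing $\mathcal H_k,\mathcal L_k$ for the $\partial$- and $\bar\partial$-energy densities one has $\mathcal H_k\mathcal L_k=|\Phi|^2$ and $\mathcal H_k-\mathcal L_k=\mathrm{Jac}(F_k)$, and the harmonic-map equation yields $\Delta\log(\mathcal H_k/\mathcal L_k)=-4K^g\,\mathrm{Jac}(F_k)$ wherever $\Phi\neq0$. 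Rather than quote this, I would re-derive it internally from the structure equations of Proposition \ref{p:funddata} together with (\ref{e:normalcurvintermsf})--(\ref{e:normoffk}), precisely so that the pseudo-Riemannian signs are under control.

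With this in hand the substitution is essentially forced. Away from (para-)complex points $\Phi\neq0$, so I set $\mathcal H_k=|\Phi|\,e^{2v_k}$, $\mathcal L_k=|\Phi|\,e^{-2v_k}$ in the hyperbolic regime (or $e^{\pm 2iv_k}$ in the trigonometric regime), the branch being dictated by the sign of $\mathcal H_k\mathcal L_k$. That sign, hence the $\sinh$/$\sin$ dichotomy, I read off from (\ref{e:gammanorsec}): positivity of $|\gamma_k|^2$ forces $\epsilon b(\epsilon C_k^2+(-1)^{p+1})\ge0$, which confines $|C_k|$ to a definite range ($>1$ or $<1$) in each of the four regimes and splits the Lorentzian case $p=1$ into the sub-cases $|C_k|<1$ and $|C_k|>1$. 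Then $\mathrm{Jac}(F_k)=2|\Phi|\sinh(2v_k)$ (resp. $2|\Phi|\sin(2v_k)$), and converting $\Delta$ to $\partial_z\partial_{\bar z}$ via (\ref{e:curvintermsu}) and inserting the coefficient computed above turns the Bochner identity into exactly
\[
(v_k)_{z\bar z}\pm\frac{|\langle J_1F_z,J_2F_z\rangle|}{4}\sinh(2v_k)=0
\]
(resp. with $\sin$). I would then take $v:=v_1$, $w:=v_2$; the opposite sign patterns in parts (2)--(4) arise because $\mathrm{Jac}(F_1)=(C_1+C_2)/2$ and $\mathrm{Jac}(F_2)=(C_2-C_1)/2$ carry opposite orientations, and $\lambda=\mathrm{sgn}(C_1C_2)$ enters through the sign of these Jacobians in $\mathrm{Jac}(F_k)=\pm2|\Phi|\sinh(2v_k)$.

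The routine part is the hyperbolic/trigonometric algebra and the Laplacian conversion. The genuine difficulty is twofold: establishing the energy-density identity with correct signs in the neutral-domain and Lorentzian-target settings, where the standard Riemannian Bochner formula must be adapted and the factors $\epsilon$, $b$, $(-1)^{p+1}$ tracked with care; and the bookkeeping matching each regime to the correct branch ($\sinh$ versus $\sin$), the correct overall sign, and the sub-cases $|C_k|<1$ versus $|C_k|>1$ in part (2). I expect the sign bookkeeping --- pinning down $\lambda=\mathrm{sgn}(C_1C_2)$ and the relative sign between the $v$- and $w$-equations --- to be the main obstacle, and I would organize it by treating the definite Riemannian case $p\in\{0,2\}$ in full first and then deducing the remaining cases from the anti-isometries and signature changes recorded in Section \ref{s:construction}.
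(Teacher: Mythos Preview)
Your approach is correct but takes a genuinely different route from the paper's.

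The paper never invokes the harmonic-map interpretation. Instead it works entirely with the K\"ahler functions $C_1,C_2$: from (\ref{e:lenfthofkahler}) and (\ref{e:laplacian}) it computes $\Delta\log|C_j\pm 1|$ (or $\Delta\tan^{-1}C_j$ in the trigonometric cases), then sets
\[
2v=\log\sqrt{\tfrac{|C_1+1||C_2+1|}{|C_1-1||C_2-1|}},\qquad
2w=\log\sqrt{\tfrac{|C_1-1||C_2+1|}{|C_1+1||C_2-1|}}
\]
(respectively $v=\tfrac12(\tan^{-1}C_1+\tan^{-1}C_2)$, $w=\tfrac12(\tan^{-1}C_1-\tan^{-1}C_2)$), so that $C_j$ becomes $\coth$, $\tanh$ or $\tan$ of $v\pm w$. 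Expressing $e^{2u}$ via $|\langle J_1F_z,J_2F_z\rangle|=|\gamma_1\gamma_2|$ and the hyperbolic/trigonometric product-to-sum identities then yields the Gordon equations by a direct algebraic manipulation; the argument is completely internal to the structure equations already derived.

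Your route --- recognising that minimality of $F$ forces $F_1,F_2$ to be harmonic into $\mathbb S^2_p$ with common Hopf differential $\Phi=\langle (F_k)_z,(F_k)_z\rangle_p$, and then reading off a Gordon equation for each $F_k$ from the energy-density (Bochner) identity --- is the classical mechanism behind why Gordon equations appear at all, and it explains conceptually what the paper obtains by calculation. Your identification $|\langle J_1F_z,J_2F_z\rangle|=2|\Phi|$ is correct (up to the sign $(-1)^{p+1}$, which disappears under the absolute value). What you gain is a transparent reason for the result; what the paper gains is that no external harmonic-map machinery needs to be re-derived in the pseudo-Riemannian setting, since everything follows from (\ref{e:lenfthofkahler})--(\ref{e:laplacian}).

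One point to tighten in your plan: the paper's $v$ and $w$ are not literally the Bochner potentials of $F_1$ and $F_2$ but rather their symmetric and antisymmetric combinations, and the relative sign between the $v$- and $w$-equations in cases (3)--(4) versus (1)--(2) comes out of the $\sinh$/$\sin$ addition formulas after substituting $C_j$ in terms of $v\pm w$, not from ``$\mathrm{Jac}(F_1)$ and $\mathrm{Jac}(F_2)$ carrying opposite orientations''. Indeed one checks that in case (1) $\Delta v=-\mathrm{Jac}(F_1)$ and $\Delta w=+\mathrm{Jac}(F_2)$, while in case (3) $\Delta v=-\mathrm{Jac}(F_1)$ and $\Delta w=-\mathrm{Jac}(F_2)$; the final same-versus-opposite sign pattern is produced by the algebra, not by the Bochner input. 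Your awareness that this bookkeeping is the main obstacle is well placed.
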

\begin{proof}
\noindent ${\bf (1)}$  Assume that $F$ is a Riemannian immersion, that is $\epsilon=1$, of a minimal surface in ${\mathbb S}^2\times {\mathbb S}^2$ or ${\mathbb H}^2\times {\mathbb H}^2$. The normal vector field $N$ satisfies $|N|^2=-1$, and thus the constant function $b$ is equal to $1$. Since $|\gamma_j|\geq 0$, from (\ref{e:gammanorsec}), we conclude that the K\"ahler functions $C_1,C_2$ satisfy:
\begin{equation}\label{e:impiequal0}
C_1^2\geq 1,\qquad\mbox{and}\qquad C_2^2\geq 1.
\end{equation}

Using (\ref{e:lenfthofkahler}), we have the following equations
\[
\Delta\log|C_j-1|=a_j+C_{j'},\qquad \Delta\log|C_j+1|=a_j-C_{j'},
\]
where
\[
a_j=K+(-1)^{j+1}K^{\bot}.
\]
Define the real functions $v,w$ by
\begin{equation}\label{e:definitionofvandw}
2v=\log\sqrt{\frac{|C_1+1||C_2+1|}{|C_1-1||C_2-1|}},\qquad 2w=\log\sqrt{\frac{|C_1-1||C_2+1|}{|C_1+1||C_2-1|}}.
\end{equation}
Therefore,
\begin{equation}\label{e:ter32}
C_1=\coth(v-w),\qquad C_2=\coth(v+w).
\end{equation}
We also have the following relations
\[
\Delta v=-\frac{\coth(v-w)+\coth(v+w)}{2},\qquad \Delta w=-\frac{\coth(v-w)-\coth(v+w)}{2}.
\]
From (\ref{e:ter32}), observe that $C_j>0$ iff $v+(-1)^jw>0$.

\vspace{0.1in}

\noindent $\bullet$  We first consider the case $C_1C_2>0$ and assume that $C_j>1$ for $j=1,2$. 
From $\left<J_1F_z,J_2F_z\right>=-\gamma_1\gamma_2$, we see
\[
|\left<J_1F_z,J_2F_z\right>|^2=\frac{e^{4u}(C_1^2-1)(C_2^2-1)}{4}.
\]
Since $C_1C_2>0$, we have that $\sinh (v-w)\sinh (v+w)>0$. In particular, 
\[
|\sinh (v-w)|=\frac{1}{\sqrt{C_1^2-1}},\qquad |\sinh (v+w)|=\frac{1}{\sqrt{C_2^2-1}}
\]
Thus,
\begin{equation}\label{e:termsofc1c22}
e^{2u}=2|\left<J_1F_z,J_2F_z\right>|\sinh (v-w)\sinh (v+w).
\end{equation}
The fact that $\Delta f=4e^{-2u}f_{z\bar z}$ together with (\ref{e:termsofc1c22}) implies that $v$ and $w$ both satisfy the equations
\[
v_{z\bar z}+\frac{|\left<J_1F_z,J_2F_z\right>|}{4}\sinh 2v=0,\qquad 
w_{z\bar z}+\frac{|\left<J_1F_z,J_2F_z\right>|}{4}\sinh 2w=0.
\]

\noindent $\bullet$  Consider now the case $C_1C_2<0$. In this case, the computations are identical with the part of $C_1C_2>0$, except that 
\begin{equation}\label{e:termsofc1c22}
e^{2u}=-2|\left<J_1F_z,J_2F_z\right>|\sinh (v-w)\sinh (v+w).
\end{equation}
Therefore, the functions $v$ and $w$ satisfy the equations
\[
v_{z\bar z}-\frac{|\left<J_1F_z,J_2F_z\right>|}{4}\sinh 2v=0,\qquad 
w_{z\bar z}-\frac{|\left<J_1F_z,J_2F_z\right>|}{4}\sinh 2w=0.
\]

\vspace{0.1in}

\noindent ${\bf (2)}$  Assume that $F$ is a Lorentzian immersion ($\epsilon=-1$) of a minimal surface in ${\mathbb S}_1^2\times {\mathbb S}_1^2$. Working away of paracomplex points, we have that $C_j^2\neq 1$. Using (\ref{e:lenfthofkahler}), we have the following relations
\[
\Delta\log|C_j-1|=-a_j+C_{j'},\qquad \Delta\log|C_j+1|=-a_j-C_{j'}.
\]
Defining $v$ and $w$ by (\ref{e:definitionofvandw}), then $C_j=\coth(v+(-1)^j w)$,  if $|C_j|>1$ and $C_j=\tanh(v+(-1)^j w)$, if $|C_j|<1$. Observe again that, $v+(-1)^j w>0$ iff $C_j>0$.

\vspace{0.1in}

\noindent $\bullet$  Let $|C_j|<1$ for $j=1,2$. Here,
\[
\cosh(v+w)=\frac{1}{\sqrt{1-C_2^2}},\quad \cosh(v-w)=\frac{1}{\sqrt{1-C_1^2}}.
\]
Then,
\[
\Delta v=-\frac{\tanh(v-w)+\tanh(v+w)}{2},\qquad \Delta w=-\frac{\tanh(v-w)-\tanh(v+w)}{2}.
\]
We also have
\[
|\left<J_1F_z,J_2F_z\right>|^2=\frac{e^{4u}(1-C_1^2)(1-C_2^2)}{4}.
\]
and thus,
\[
e^{2u}=2|\left<J_1F_z,J_2F_z\right>|\cdot\cosh (v-w)\cdot\cosh (v+w),
\]
The fact that $\Delta f=-4e^{-2u}f_{z\bar z}$ yields,
\[
v_{z\bar z}-\frac{|\left<J_1F_z,J_2F_z\right>|}{4}\sinh 2v=0,\qquad 
w_{z\bar z}-\frac{|\left<J_1F_z,J_2F_z\right>|}{4}\sinh 2w=0.
\]

\noindent $\bullet$  Let $|C_j|>1$ for $j=1,2$. If the functions $v,w$ are defined by (\ref{e:definitionofvandw}), then
\[
C_1=\coth(v-w),\qquad C_2=\coth(v+w),
\]
and
\[
\Delta v=-\frac{\coth(v-w)+\coth(v+w)}{2},\qquad \Delta w=-\frac{\coth(v-w)-\coth(v+w)}{2}.
\]
If $C_1C_2>0$ (iff $v^2-w^2>0$), then $\sinh(v-w)\sinh(v+w)>0$ and thus,
\[
e^{2u}=2|\left<J_1F_z,J_2F_z\right>|\cdot\sinh (v-w)\cdot\sinh (v+w).
\]
A brief computation gives,
\[
v_{z\bar z}-\frac{|\left<J_1F_z,J_2F_z\right>|}{4}\sinh 2v=0,\qquad 
w_{z\bar z}-\frac{|\left<J_1F_z,J_2F_z\right>|}{4}\sinh 2w=0.
\]
If now $C_1C_2<0$ (iff $v^2-w^2<0$), we have $\sinh(v-w)\sinh(v+w)<0$ and thus,
\[
e^{2u}=-2|\left<J_1F_z,J_2F_z\right>|\cdot\sinh (v-w)\cdot\sinh (v+w).
\]
Thus,
\[
v_{z\bar z}+\frac{|\left<J_1F_z,J_2F_z\right>|}{4}\sinh 2v=0,\qquad 
w_{z\bar z}+\frac{|\left<J_1F_z,J_2F_z\right>|}{4}\sinh 2w=0.
\]






\vspace{0.1in}

\noindent ${\bf (3)}$  Assume that $F$ is a Riemannian immersion ($\epsilon=1$) of a minimal surface in ${\mathbb S}_1^2\times {\mathbb S}_1^2$. Using (\ref{e:lenfthofkahler}), we have the following relations
\[
\Delta\tan^{-1} C_j=-C_{j'}.
\]
Define the following functions:
\[
v=\frac{1}{2}(\tan^{-1}C_1+\tan^{-1}C_2),\quad w=\frac{1}{2}(\tan^{-1}C_1-\tan^{-1}C_2).
\]
Then, $C_1=\tan(v+w)$ and $C_2=\tan(v-w)$ and thus,
\[
\cos^2(v+w)=\frac{1}{1+C_1^2},\quad \cos^2(v-w)=\frac{1}{1+C_2^2}.
\]
Since $v+w=\tan^{-1}C_1$ and $v-w=\tan^{-1}C_2$ we have $|v+w|<\pi/2$ and $|v-w|<\pi/2$. Thus, $\cos(v+w)>0$ and $\cos(v-w)>0$. Therefore,
\[
\cos(v+w)=\frac{1}{\sqrt{1+C_1^2}},\quad \cos(v-w)=\frac{1}{\sqrt{1+C_2^2}}.
\]
The expression,
\[
|\left<J_1F_z,J_2F_z\right>|^2=\frac{e^{4u}}{4}(C_1^2+1)(C_2^2+1),
\]
gives
\[
e^{2u}=2|\left<J_1F_z,J_2F_z\right>|\cos (v-w)\cos (v+w).
\]
In addition,
\[
\Delta v=-\frac{\tan(v-w)+\tan(v+w)}{2},\qquad \Delta w=\frac{\tan(v+w)-\tan(v-w)}{2},
\]
and using the fact that $\Delta f=4e^{-2u}f_{z\bar z}$ we finally obtain,
\[
v_{z\bar z}+\frac{|\left<J_1F_z,J_2F_z\right>|}{4}\sin 2v=0,\qquad 
w_{z\bar z}-\frac{|\left<J_1F_z,J_2F_z\right>|}{4}\sin 2w=0.
\]

\vspace{0.1in}

\noindent ${\bf (4)}$  Assume that $F$ is a Lorentzian immersion ($\epsilon=-1$) of a minimal surface in ${\mathbb S}^2\times {\mathbb S}^2$ or ${\mathbb H}^2\times {\mathbb H}^2$. We define the functions $u,v$ as for the case ${\bf (3)}$ and following the same argument and using the fact that $\Delta f=4e^{-2u}f_{z\bar z}$ we find,
\[
v_{z\bar z}+\frac{|\left<J_1F_z,J_2F_z\right>|}{4}\sin 2v=0,\qquad 
w_{z\bar z}-\frac{|\left<J_1F_z,J_2F_z\right>|}{4}\sin 2w=0.
\]\end{proof}

\vspace{0.1in}

We denote by ${\mathbb D}$ the paracomplex plane $({\mathbb R}^2,\cdot)$, where $(x_1,y_1)\cdot (x_2,y_2)=(x_1x_2+y_1y_2,x_1y_2+x_2y_1)$. We now construct a 1-parameter family of minimal surfaces that are neither (para-)complex nor Lagrangian.

\begin{Thm} We have the following statements:

\noindent \emph{(A)} Let $v,w$ be two (para-)complex real functions satisfying 
\begin{equation}\label{e:firsequa}
v_{z\bar z}+\frac{1}{2}\sinh 2v=0\quad\mbox{and}\quad w_{z\bar z}+\frac{1}{2}\sinh 2w=0,
\end{equation}
\begin{enumerate}
\item
On the region $\Gamma:=\{z\in{\mathbb C}|\; v^2(z,\bar z)-w^2(z,\bar z)>0\}$, there exists a 1-parameter family of Riemannian minimal immersions $F_t:\Gamma\rightarrow {\mathbb S}^2_p\times {\mathbb S}^2_p$, $p\in\{0,2\}$, without complex points, whose induced metric is $4\sinh(v+w)\sinh(v-w)|dz|^2$ and whose K\"ahler functions are $C_j=\coth(v+(-1)^j w)$.

\vspace{0.1in}

\item On the region $\Gamma:=\{z\in{\mathbb D}|\; v^2(z,\bar z)-w^2(z,\bar z)<0\}$, there exists a 1-parameter family of Lorentzian minimal immersions $F_t:\Gamma\rightarrow {\mathbb S}^2_1\times {\mathbb S}^2_1$, without paracomplex points, whose induced metric is $-4\sinh(v+w)\sinh(v-w)|dz|^2$ and whose K\"ahler functions are $C_j=\coth(v+(-1)^j w)$.

\end{enumerate}

\noindent \emph{(B)} Let $v,w$ be two (para-)complex real functions satisfying 
\begin{equation}\label{e:firsequa1}
v_{z\bar z}-\frac{1}{2}\sinh 2v=0\quad\mbox{and}\quad w_{z\bar z}-\frac{1}{2}\sinh 2w=0,
\end{equation}
\begin{enumerate}
\item
On the region $\Gamma:=\{z\in{\mathbb D}|\; |v-w|<1,\; |v+w|<1\}$, there exists a 1-parameter family of Lorentzian minimal immersions $F_t:\Gamma\rightarrow {\mathbb S}^2_1\times {\mathbb S}^2_1$, without paracomplex points, whose induced metric is $4\cosh(v+w)\cosh(v-w)|dz|^2$ and whose K\"ahler functions are $C_j=\tanh(v+(-1)^j w)$.

\vspace{0.1in}

\item On the region $\Gamma:=\{z\in{\mathbb D}|\; v^2(z,\bar z)-w^2(z,\bar z)>0,\; |v-w|>1,\; |v+w|>1\}$, there exists a 1-parameter family of Lorentzian minimal immersions $F_t:\Gamma\rightarrow {\mathbb S}^2_1\times {\mathbb S}^2_1$, without paracomplex points, whose induced metric is $4\sinh(v+w)\sinh(v-w)|dz|^2$ and whose K\"ahler functions are $C_j=\coth(v+(-1)^j w)$.

\end{enumerate}

\noindent \emph{(C)} Let $v,w$ be two (para-)complex real functions satisfying 
\begin{equation}\label{e:firsequa2}
v_{z\bar z}+\frac{1}{2}\sin 2v=0\quad\mbox{and}\quad w_{z\bar z}-\frac{1}{2}\sin 2w=0,
\end{equation}
\begin{enumerate}
\item
On the region $\Gamma:=\{z\in{\mathbb C}|\; |v-w|<\pi/2,\; |v+w|<\pi/2\}$, there exists a 1-parameter family of Riemannian minimal immersions $F_t:\Gamma\rightarrow {\mathbb S}^2_1\times {\mathbb S}^2_1$, without complex points, whose induced metric is $4\cos(v+w)\cos(v-w)|dz|^2$ and whose K\"ahler functions are $C_j=\tan(v+(-1)^{j+1} w)$.

\vspace{0.1in}

\item On the region $\Gamma:=\{z\in{\mathbb D}|\; |v-w|<\pi/2,\; |v+w|<\pi/2\}$, there exists a 1-parameter family of Lorentzian minimal immersions $F_t:\Gamma\rightarrow {\mathbb S}^2_p\times {\mathbb S}^2_p$, where $p\in\{0,2\}$, without paracomplex points, whose induced metric is $4\cos(v+w)\cos(v-w)|dz|^2$ and whose K\"ahler functions are $C_j=\tan(v+(-1)^{j+1} w)$.

\end{enumerate}
\end{Thm}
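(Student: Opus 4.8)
The plan is to reverse the computation of Proposition~\ref{t:sinhgo}: given a solution $(v,w)$ of the prescribed Gordon system, I build a full set of fundamental data $(u,C_j,\gamma_j,f_j)$ together with the function $A$, and then feed it into the existence statement Proposition~\ref{p:compatibility}, which returns a minimal immersion. Throughout I assume $v,w$ non-constant, so that the resulting $C_j$ are non-constant as that Proposition requires. For each of the six situations I first record the discrete data: $\epsilon=1$ in the Riemannian cases and $\epsilon=-1$ in the Lorentzian ones, the stated range of $p$, and the sign $b\in\{-1,1\}$ of (\ref{e:normalnorm}), which I fix below so that $|\gamma_j|^2$ comes out nonzero of the correct sign.

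I take the K\"ahler functions to be the prescribed expressions in $(v,w)$ (for instance $C_j=\coth(v+(-1)^jw)$ in case (A)) and define $u$ by setting $e^{2u}$ equal to the prescribed conformal factor. The first, purely algebraic, step is to check admissibility on the region $\Gamma$. When $v^{2}-w^{2}>0$ (resp. $<0$) the factors $\sinh(v+w)$ and $\sinh(v-w)$ have equal (resp. opposite) sign, so the conformal factor is positive and $e^{2u}>0$; in the remaining cases the factor is manifestly positive ($\cosh$, or $\cos$ with $|v\pm w|<\pi/2$). Feeding $C_j$ into (\ref{e:gammanorsec}) and using the identities $\coth^{2}-1=\sinh^{-2}$, $1-\tanh^{2}=\cosh^{-2}$, $1+\tan^{2}=\cos^{-2}$ then shows $|\gamma_j|^{2}\neq0$ for the chosen $b$. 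This one computation simultaneously pins down $\Gamma$, the admissible $p$, and the clause ``without (para-)complex points,'' which is exactly the condition $C_j^{2}\neq1$. I also note that the prescribed conformal factor is forced rather than arbitrary: the same identities give
\[
|\gamma_1\gamma_2|=|\left<J_1F_z,J_2F_z\right>|=2,
\]
and it is precisely this normalization that converts the coefficient $|\left<J_1F_z,J_2F_z\right>|/4$ of Proposition~\ref{t:sinhgo} into the $1/2$ of (\ref{e:firsequa})--(\ref{e:firsequa2}).

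With $u$ and $C_j$ in hand, the moduli $|\gamma_j|$ are fixed by (\ref{e:gammanorsec}); I then recover $f_j$ by solving (\ref{e:kahler}) for it (legitimate since $\gamma_j\neq0$ on $\Gamma$), take $A$ from the $j=1$ instance of (\ref{e:derivofgamma}), and adjust the phases of the $\gamma_j$ so that the $j=2$ instance of (\ref{e:derivofgamma}) holds with the same $A$ --- possible exactly because $|\gamma_1\gamma_2|$ is constant, so the governing phase equation is integrable and leaves one real constant free. The only substantive content then is the single remaining equation (\ref{e:deroff1}), equivalently the integrability identity (\ref{e:theimportter}). This is the main obstacle, and it is the exact reverse of Proposition~\ref{t:sinhgo}: after substituting the reconstructed $C_j,u,\gamma_j,f_j$, equation (\ref{e:theimportter}) collapses --- through the relations (\ref{e:lenfthofkahler}) and (\ref{e:laplacian}) and the nonlinear terms $4\epsilon e^{-2u}|f_j|^{2}$ and $\bar A_z+A_{\bar z}$ --- to precisely the pair of Gordon equations satisfied by $v$ and $w$. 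The bookkeeping of the signs $\epsilon,b,p$ and of the $\coth/\tanh/\tan$ identities across all six cases is where essentially all the labor lies. Once (\ref{e:theimportter}) is verified, Proposition~\ref{p:compatibility} delivers a minimal immersion $F_0$ with the stated induced metric and K\"ahler functions.

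It remains to exhibit the $1$-parameter family, which I obtain as an associated family. I deform the data by multiplying $\gamma_j$ and $f_j$, for both $j=1,2$, by one common constant unit-modulus (para-)complex factor $\mu_t$ (so $\mu_t=e^{it}$ when $\epsilon=1$, and a norm-one paracomplex unit when $\epsilon=-1$), leaving $u$, $C_j$ and $A$ unchanged. Since $\mu_t$ has modulus one and the combinations $\bar\gamma_jf_j$, $(\gamma_j)_z/\gamma_j$ and $C_jf_j/\gamma_j$ entering (\ref{e:kahler}), (\ref{e:derivofgamma}) and the definition (\ref{e:themissing}) of $A$ are invariant, every one of (\ref{e:kahler})--(\ref{e:gammanorsec}) is preserved, so Proposition~\ref{p:compatibility} returns a family $F_t$ with the same metric and the same $C_j$. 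Crucially this deformation applies the \emph{same} factor to $\gamma_1$ and $\gamma_2$, in contrast to the reciprocal factors ${\texttt{exp}\epsilon}(\mp i\theta)$ of a normal-frame change; hence $F_t$ is a genuine deformation rather than a re-gauging of $F_0$, and since the term $\epsilon(-1)^{p}b\gamma_1\gamma_2F/2$ of (\ref{e:protothema}) acquires the phase $\mu_t^{2}$ while the normal part scales by $\mu_t$, the members are non-congruent in general. Finally $C_j$ is non-constant, hence not identically zero, so $F_t$ is not Lagrangian, and $C_j^{2}\neq1$ on $\Gamma$, so $F_t$ is not (para-)complex, which is exactly what is asserted.
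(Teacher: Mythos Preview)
Your proposal is correct and follows essentially the same route as the paper: prescribe $C_j$ and $e^{2u}$ as in the statement, reconstruct $\gamma_j$, $f_j$ and $A$ so that (\ref{e:kahler}), (\ref{e:derivofgamma}) and (\ref{e:gammanorsec}) hold by construction, reduce the remaining equation (\ref{e:deroff1}) to the Gordon system for $v,w$, and then invoke Proposition~\ref{p:compatibility}. The only difference is presentational: the paper writes down the data in closed form (e.g.\ in case (A.1), $\gamma_j=\sqrt{2}\,e^{it/2}\sqrt{\sinh(v+(-1)^{j+1}w)/\sinh(v+(-1)^jw)}$, $f_j=-i\gamma_j\,\partial_z(v+(-1)^jw)$, $A=\partial_z\log\sqrt{\sinh(v+w)/\sinh(v-w)}$) and verifies (\ref{e:kahler})--(\ref{e:gammanorsec}) directly, with the factor $e^{it/2}$ on $\gamma_j$ (and hence on $f_j$) realizing exactly the common-phase $1$-parameter deformation you describe abstractly.
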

\begin{proof}
(A.1.) Let $v,w$ be solutions of (\ref{e:firsequa}). For $j=1,2$, we define the following functions on $\Gamma=\{z\in{\mathbb C}|\; v^2(z,\bar z)-w^2(z,\bar z)>0\}$:
\[
C_j=\coth(v+(-1)^jw),\qquad \gamma_j=\sqrt{2}e^{it/2}\sqrt{\frac{\sinh(v+(-1)^{j+1}w)}{\sinh(v+(-1)^jw)}},
\]
\[
e^{2u}=4\sinh(v-w)\sinh(v+w),\qquad f_j=-i\gamma_j\frac{\partial}{\partial z} (v+(-1)^jw).
\]

Then, $C_j^2>1$ and $2|\gamma_j|^2=e^{2u}(C_j^2-1)$. Furthermore, for each $t\in {\mathbb R}$ the octuple $(u,A,C_j,\gamma_j,f_j)$ satisfy the compatibility equations (\ref{e:kahler}),  (\ref{e:deroff1}) and  (\ref{e:derivofgamma}) for $\epsilon=b=1$ and $p\in\{0,2\}$ for $A$ given by
\[
A=\frac{\partial}{\partial z}\left(\log\sqrt{\frac{\sinh(v+w)}{\sinh(v-w)}}\right).
\]
Note that the complex function also satisfies (\ref{e:themissing}).
Thus, from Proposition \ref{p:compatibility}, the septuple $(u,C_j,\gamma_j,f_j)$ is the fundamental data of a unique Riemannian minimal immersion $F_t:\Gamma\rightarrow {\mathbb S}_p^2\times {\mathbb S}_p^2$ with $p\in\{0,2\}$. Moreover, for each $t\in {\mathbb R}$, the immersion $F_t$ is non-Lagrangian without complex points.

The cases (A.2),  (B.1) , (B.2) and (B.3) are similar to (A.1).

\vspace{0.1in}

\noindent (C.1.) Let $v,w$ be solutions of (\ref{e:firsequa2}). For $j=1,2$, we define the following functions on $\Gamma=\{z\in{\mathbb C}|\; |v-w|<\pi/2,\; |v+w|<\pi/2\}$:
\[
C_j=\tan(v+(-1)^{j+1}w),\qquad \gamma_j=\sqrt{2}e^{it/2}\sqrt{\frac{\cos(v+(-1)^{j}w)}{\cos(v+(-1)^{j+1}w)}},
\]
\[
e^{2u}=4\cos(v-w)\cos(v+w),\qquad f_j=i\gamma_j\frac{\partial}{\partial z} (v+(-1)^{j+1}w).
\]
In this case, $2|\gamma_j|^2=e^{2u}(C_j^2+1)$. For each $t\in {\mathbb R}$, the septuple $(u,C_j,\gamma_j,f_j)$ satisfy the compatibility equations (\ref{e:kahler}),  (\ref{e:deroff1}) and  (\ref{e:derivofgamma}) for $\epsilon=b=1$ and $p=1$ and 
\[
A=\frac{\partial}{\partial z}\left(\log\sqrt{\frac{\cos(v-w)}{\cos(v+w)}}\right),
\] 
which a brief computation shows that satisfies (\ref{e:themissing}). Thus, from Proposition \ref{p:compatibility}, the septuple $(u,C_j,\gamma_j,f_j)$ is the fundamental data of a unique Riemannian minimal immersion $F_t:\Gamma\rightarrow {\mathbb S}_1^2\times {\mathbb S}_1^2$. Furthermore, for each $t\in {\mathbb R}$, the immersion $F_t$ is non-Lagrangian without complex points. 

Following a similar argument with the case (C.1), we prove (C.2).
\end{proof}

\vspace{0.1in}

\subsection{Compact surfaces}

By a compact surface we mean that the surface is closed, oriented and connected.
For a $G$-minimal immersion of a compact surface in ${\mathbb S}^2\times {\mathbb S}^2$ or ${\mathbb H}^2\times {\mathbb H}^2$, we have the following theorem:
\begin{Thm}\label{t:secondthe}
Let $\Sigma$ be a compact two-manifold, $G$-minimally immersed in ${\mathbb S}_p^2\times {\mathbb S}_p^2$, with  $p\in\{0,2\}$.

\noindent $(1)$ If the induced metric is Riemannian, then $\Sigma$ is a complex curve. In particular, $\Sigma$ is a topological type of a sphere.

\noindent $(2)$ If the induced metric is Lorentzian then for every $j=1,2$, there exists a $\Omega_j$-Lagrangian curve.
\end{Thm}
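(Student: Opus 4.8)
The plan is to treat the two parts by genuinely different mechanisms, reflecting the fact that in the Lorentzian case there are \emph{no} complex points, whereas in the Riemannian case the complex points are exactly what must be located.

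For part (2) ($p\in\{0,2\}$, Lorentzian, so $\epsilon=-1$) I would first note that there are no complex points at all: a complex point with respect to $J_k$ is a zero of $\epsilon C_k^2+(-1)^{p+1}$, which here equals $-(C_k^2+1)$ and never vanishes. Hence \eqref{e:lenfthofkahler} and \eqref{e:laplacian} hold on all of $\Sigma$, and the globally smooth functions $\arctan C_j$ are available. A direct computation from \eqref{e:lenfthofkahler} and \eqref{e:laplacian} (parallel to the computations in the proof of Proposition~\ref{t:sinhgo}) gives $\Delta\arctan C_j=-C_{j'}$ for $j=1,2$, where $\Delta$ is the Laplacian of the Lorentzian metric $F^{\ast}G$. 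Since $\Sigma$ is closed, the integral of any metric Laplacian vanishes, so $\int_\Sigma C_{j'}\,dV=0$. As $\Sigma$ is connected and $C_{j'}$ continuous, $C_{j'}$ must vanish somewhere; letting $j$ run over $\{1,2\}$ produces, for each index $k$, a point where $C_k=0$, i.e. an $\Omega_k$-Lagrangian point whose zero locus is the asserted Lagrangian curve. (I would also record that a Lorentzian metric on a closed oriented surface forces $\Sigma$ to be a torus, though this is not needed.)

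For part (1) ($p\in\{0,2\}$, Riemannian, $\epsilon=b=1$) I start from \eqref{e:gammanorsec}, giving $|\gamma_j|^2=\tfrac12 e^{2u}(C_j^2-1)\ge 0$, hence $C_j^2\ge 1$; by connectedness each $C_j$ keeps a constant sign, so $|C_j|\ge 1$ throughout. Because those signs are constant, one of the Jacobians is nowhere zero: if $C_1,C_2$ agree in sign then $|{\rm Jac}(F_1)|=\tfrac12|C_1+C_2|\ge 1$, and if they differ then $|{\rm Jac}(F_2)|=\tfrac12|C_2-C_1|\ge 1$. Thus some $F_k:\Sigma\to\S^2_p$ is a local diffeomorphism. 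For $p=0$ the target is compact and simply connected, so $F_k$ is a covering and $\Sigma$ is a topological sphere; for $p=2$ the non-compact factor rules out such a map from a compact $\Sigma$, so the statement is vacuous. This already yields the sphere conclusion.

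It remains to show $\Sigma\cong\mathbb{S}^2$ is a complex curve, and the key point is that $\gamma_1\gamma_2\,dz^2$ is a holomorphic quadratic differential on $(\Sigma,\jmath)$. Indeed $\gamma_1\gamma_2$ is frame independent (a change of normal frame multiplies $\gamma_1,\gamma_2$ by reciprocal factors) and transforms as a quadratic differential in $z$; moreover, conjugating \eqref{e:derivofgamma} gives $(\gamma_j)_{\bar z}=(-1)^{j+1}\gamma_j\bar A$, whence $\partial_{\bar z}(\gamma_1\gamma_2)=\gamma_1\gamma_2\bar A-\gamma_1\gamma_2\bar A=0$. A genus-$0$ surface carries no nonzero holomorphic quadratic differential, so $\gamma_1\gamma_2\equiv 0$. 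Using $\langle J_1F_z,J_2F_z\rangle=-\gamma_1\gamma_2$ and $|\langle J_1F_z,J_2F_z\rangle|^2=\tfrac14 e^{4u}(C_1^2-1)(C_2^2-1)$, this forces $(C_1^2-1)(C_2^2-1)\equiv 0$, i.e. every point is complex with respect to $J_1$ or $J_2$: $\Sigma$ is a complex curve.

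The delicate part is the Riemannian case. One must establish the sphere by the covering argument \emph{before} the quadratic-differential step, since the vanishing uses genus $0$; checking that the relevant Jacobian is genuinely nowhere zero in every sign configuration (where $|C_j|\ge 1$ is essential) is the crux. I would also stress the alternative I would avoid: trying to prove the complex-curve property directly by a maximum principle on the sinh-Gordon functions of Proposition~\ref{t:sinhgo} succeeds when $C_1,C_2$ share a sign (there $v$ is superharmonic, positive, and blows up at complex points, forcing an impossible interior minimum), but breaks down in the opposite-sign configuration, which is precisely why the detour through genus $0$ and the holomorphic differential is the clean route. Everything else—the identities \eqref{e:lenfthofkahler}, \eqref{e:laplacian} and the Gauss/Jacobian relations—is already available in the text.
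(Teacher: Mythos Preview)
Your proof of part~(2) is essentially identical to the paper's: both compute $\Delta\arctan C_j=-C_{j'}$ from \eqref{e:lenfthofkahler}--\eqref{e:laplacian}, integrate over the closed surface, and conclude that each $C_k$ has a zero. The paper is equally informal about upgrading ``zero set'' to ``curve''.

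For part~(1) your argument is correct but takes a genuinely different route to the genus-$0$ step. The paper first runs a maximum/minimum principle on $C_j$ (using \eqref{e:impiequal} and \eqref{e:impiequal1}) to force the existence of $J_j$-complex points, and then shows that every critical point of $C_j$ is a local extremum, invoking Morse theory to conclude $\Sigma\cong\mathbb{S}^2$. You bypass both steps with the observation that, since each $C_j$ has constant sign with $|C_j|\ge 1$, one of $\mathrm{Jac}(F_k)=\tfrac12(\pm C_1+C_2)$ is bounded away from zero, so some $F_k$ is a local diffeomorphism onto $\mathbb{S}^2_p$; for $p=0$ this is a covering of a simply connected space, hence $\Sigma\cong\mathbb{S}^2$, and for $p=2$ it is impossible since a component of $\mathbb{S}^2_2$ is noncompact, making that case vacuous. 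After this, both proofs coincide: the Hopf differential $\langle J_1F_z,J_2F_z\rangle\,dz^2=-\gamma_1\gamma_2\,dz^2$ is holomorphic by \eqref{e:derivofgamma}, hence vanishes on a sphere, forcing $(C_1^2-1)(C_2^2-1)\equiv 0$.

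What each approach buys: your covering argument is shorter, avoids Morse theory, and yields the extra information that no compact Riemannian $G$-minimal surface exists in $\mathbb{H}^2\times\mathbb{H}^2$. The paper's critical-point analysis, on the other hand, stays within the analytic framework of the K\"ahler functions and mirrors the techniques used later (e.g.\ in Theorem~\ref{t:secondthe2}); it also establishes directly that $J_j$-complex points exist before the genus is known, which is of independent interest. Your closing remark about the sinh-Gordon maximum principle is tangential: the paper's own maximum principle is applied to $C_j$ itself, not to the functions $v,w$ of Proposition~\ref{t:sinhgo}, and it does handle both sign configurations.
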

\begin{proof}
\noindent (1) Let $F:\Sigma\rightarrow {\mathbb S}_p^2\times {\mathbb S}_p^2$ be a $G$-minimal immersion such that $F^{\ast}G$ is Riemannian. As we have seen in the proof of  Proposition \ref{t:sinhgo}, when $p\in\{0,2\}$ the K\"ahler functions $C_1,C_2$ satisfy (\ref{e:impiequal0}). The expressions (\ref{e:lenfthofkahler}) and (\ref{e:laplacian}) become
\begin{equation}\label{e:impiequal}
|\nabla C_j|^2=(C_j^2-1)\Big(K+(-1)^{j+1}K^{\bot}-C_1C_2\Big),
\end{equation}
\begin{equation}\label{e:impiequal1}
\Delta C_j=2C_j\Big(K+(-1)^{j+1}K^{\bot}\Big)-C_{j'}(1+C_j^2).
\end{equation}
We first prove the existence of a $J_j$ complex point, for every $j=1,2$. Assume that $\Sigma$ has no $J_j$ complex points, for some $j=1,2$. Then $C_j^2(x)>1$, for every $x\in \Sigma$. Let $M$ be a maximum point of $C_j$ and assume that $M$ is not $J_j$-complex. Since $M$ is a critical point of $C_j$, from (\ref{e:impiequal}) we have that 
\begin{equation}\label{e:impiequal2}
K(M)+(-1)^{j+1}K^{\bot}(M)=C_1(M)C_2(M).
\end{equation}
On the other hand, we have that $\Delta C_j(M)\leq 0$ and, using (\ref{e:impiequal1}) and (\ref{e:impiequal2}), we obtain the following inequality:
\[
C_{j'}(M)(C_j^2(M)-1)\leq 0.
\]
By the assumption that $M$ is a non-$J_j$-complex point, we have that $C_j^2(M)>1$ and thus, $C_{j'}(M)\leq 0$. The fact that $\Sigma$ is connected implies that $C_{j'}(x)\leq -1$ for all $x\in\Sigma$.

Let $m$ be the minimum point of $C_j$. Then, (\ref{e:impiequal}) gives 
\begin{equation}\label{e:impiequal22}
K(m)+(-1)^{j+1}K^{\bot}(m)=C_1(m)C_2(m).
\end{equation}
Using the fact that $\Delta C_j(m)\geq 0$, the equations (\ref{e:impiequal1}) and (\ref{e:impiequal22}) give
\[
C_{j'}(m)(C_j^2(m)-1)\geq 0,
\]
which implies that $C_{j'}(m)\geq 0$ and therefore we have a contradiction.

We now prove that $\Sigma$ is a topological type of a sphere.

If $F$ is a complex curve with respect to both complex structures then $\Sigma$ must have the same genus as ${\mathbb S}_p^2$. Then for $p=0$, the compact surface $\Sigma$ is a topological type of a sphere. Note that for the case $p=2$ no such immersion exists,  since the hyperbolic plane ${\mathbb H}^2$ is non-compact.

Assume now that $F$ is a non-complex curve with respect to $J_j$ (then it must have isolated $J_j$-complex points) and suppose that $p\in\Sigma$ is a critical point of $C_j$. If $y$ is a complex point, then $C_j^2(y)=1$, and thus $y$ is a local maximum or minimum of $C_j$ (because $C_j^2(x)\geq 1$, for every $x\in\Sigma$ and $\Sigma$ is connected). Assuming that $y$ is a critical, non-$J_j$-complex point, from (\ref{e:impiequal}) we get
\begin{equation}\label{e:niceequality}
K(y)+(-1)^{j+1}K^{\bot}(y)=C_1(y)C_2(y).
\end{equation}
Substituting (\ref{e:niceequality}) into (\ref{e:impiequal1}), we have
\[
\Delta C_j(y)=2C_{j'}(y)(C_j^2(y)-1),
\]
which is non-zero and thus, $y$ is a local maximum or minimum. Then every critical point is either local minimum or local maximum and using  Morse's Theorem, we conclude that $\Sigma$ is a topological type of a sphere. 

We now prove that $F$ must be a complex curve. Consider the Hopf differential $\Theta$ associated to the immersion $F$ given by,
\[
\Theta(z)=\frac{1}{2}\left<J_1F_z,J_2F_z\right>dz\otimes dz,
\]
where $z$ is the conformal parameter. From (\ref{e:exprgamma1}), we see that $\left<J_1F_z,J_2F_z\right>=-\gamma_1\gamma_2$ and using (\ref{e:derivofgamma}), we see that $\Theta$ is holomorphic. Because $\Sigma$ is a sphere, it follows from Riemann-Roch's Theorem that $\Theta$ is must be indentically zero throughout $\Sigma$ and thus $\gamma_1\gamma_2=0$ which means that $F$ must be a complex curve.

\vspace{0.1in}

\noindent (2) In this case, the expressions (\ref{e:lenfthofkahler}) and (\ref{e:laplacian}) become
\begin{equation}\label{e:impiequal2}
|\nabla C_j|^2=-(C_j^2+1)\Big(K+b(-1)^{j}K^{\bot}-C_1C_2\Big),
\end{equation}
\begin{equation}\label{e:impiequal3}
\Delta C_j=-2C_j\Big(K+b(-1)^{j}K^{\bot}\Big)-C_{j'}(1-C_j^2).
\end{equation}
Then,
\begin{eqnarray}
\Delta\tan^{-1}C_j&=&\frac{(C_j^2+1)\Delta C_j-2C_j|\nabla C_j|^2}{(C_j^2+1)^2},\nonumber
\end{eqnarray}
and using (\ref{e:impiequal2}) with (\ref{e:impiequal3}), we obtain
\[
\Delta\tan^{-1}C_j=-C_{j'}.
\] 
Hence,
\[
\int_{\Sigma}C_1=\int_{\Sigma}C_2=0,
\]
and thus $C_1$ and $C_2$ must vanishes at least in curves on $\Sigma$.
\end{proof}

\begin{Cor}
Every compact $G$-minimal surface in ${\mathbb S}^2\times {\mathbb S}^2$ or in ${\mathbb H}^2\times {\mathbb H}^2$ is either a topological type of a sphere or a torus.
\end{Cor}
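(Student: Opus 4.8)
The plan is to dichotomize according to the causal character of the induced metric $F^{\ast}G$. Since $F$ is an immersion into the neutral-metric product and $\Sigma$ is two-dimensional, the pulled-back metric $F^{\ast}G$ is nondegenerate of signature either $(2,0)$ or $(1,1)$; that is, the induced metric is either Riemannian (definite) or Lorentzian, and these two possibilities are exhaustive. Thus it suffices to treat each case separately and read off the topological type, invoking Theorem \ref{t:secondthe} where the induced metric is Riemannian and a purely topological argument where it is Lorentzian.

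In the Riemannian case there is nothing further to prove: Theorem \ref{t:secondthe}(1) asserts directly that a compact $G$-minimal surface with Riemannian induced metric is a complex curve and, in particular, is of the topological type of a sphere. Hence this branch contributes exactly the sphere, and all of the analytic work (the K\"ahler-function identities, the Hopf differential, the Morse-theoretic count of critical points) has already been carried out there.

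The remaining content lies in the Lorentzian case, and here I would argue purely topologically. A Lorentzian metric on $\Sigma$ determines its null cone at each point, hence an unordered pair of null directions; globally this furnishes a line field on $\Sigma$, i.e.\ a section of the projectivized tangent bundle. By the Poincar\'e--Hopf theorem, a closed surface admits a line field if and only if its Euler characteristic vanishes, so $\chi(\Sigma)=0$. Since $\Sigma$ is closed and orientable, $\chi(\Sigma)=0$ forces genus one, so $\Sigma$ is a torus. Combining the two branches yields the stated dichotomy.

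The step I expect to require the most care is the global passage to the line field in the Lorentzian case: the two null directions need not be individually globally defined, since they may be interchanged along noncontractible loops, so one must work with the unordered pair, i.e.\ with a genuine line field rather than a nowhere-vanishing vector field, before invoking the Euler-characteristic obstruction. Once this is phrased correctly the topological conclusion is immediate, and no further use of the analytic structure beyond Theorem \ref{t:secondthe}(1) is needed.
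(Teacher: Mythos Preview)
Your proposal is correct and is essentially the argument the paper has in mind: the corollary is stated without proof immediately after Theorem~\ref{t:secondthe}, and elsewhere (in the proof of Theorem~\ref{t:secondthe2}) the paper simply writes ``Clearly $\Sigma$ is a tori when the induced metric is Lorentzian,'' which is exactly your Poincar\'e--Hopf/line-field step. One small remark: since $\Sigma$ is assumed oriented, the two null line fields are in fact globally and separately defined (the orientation distinguishes them), so the ``unordered pair'' caveat you flag, while harmless, is not actually needed here.
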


For the case of $d{\mathbb S}^2\times d{\mathbb S}^2$, we obtain the following:

\begin{Thm}\label{t:secondthe2}
Every compact $G$-minimal surface $\Sigma$ in $d{\mathbb S}^2\times d{\mathbb S}^2$ is a topological type of a torus. If furthermore, the metric $G$ induced on $\Sigma$ is Riemannian then for every $j=1,2$, there exists a $\Omega_j$-Lagrangian point. 
\end{Thm}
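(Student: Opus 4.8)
The plan is to treat the two possible signatures of the induced metric $F^{\ast}G$ separately, bearing in mind that $d{\mathbb S}^2={\mathbb S}^2_1$ means we are in the case $p=1$ throughout. If $F^{\ast}G$ is Lorentzian, then $\Sigma$ is a closed, oriented, connected surface carrying a Lorentzian metric. Since a closed manifold admits a metric of signature $(1,1)$ if and only if it admits a nowhere-vanishing line field, this forces $\chi(\Sigma)=0$, and an oriented closed surface with $\chi=0$ is a torus. No structure-equation computation is needed in this case.

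The Riemannian case is where the (para-)complex geometry enters. First I would note that for $p=1$, $\epsilon=1$ (whence $b=1$), equation (\ref{e:gammanorsec}) reads $|\gamma_j|^2=\tfrac{e^{2u}}{2}(C_j^2+1)>0$, so $\gamma_j$ never vanishes; in particular $\Sigma$ has no paracomplex points and $\gamma_1\gamma_2$ is nowhere zero. Then I would consider the Hopf differential $\Theta=\tfrac12\langle J_1F_z,J_2F_z\rangle\,dz\otimes dz=-\tfrac12\gamma_1\gamma_2\,dz\otimes dz$. Differentiating the relations (\ref{e:derivofgamma}) gives $(\gamma_1\gamma_2)_{\bar z}=0$, because the factor $A$ cancels due to the opposite signs $(-1)^{j+1}$, so $\Theta$ is a holomorphic quadratic differential on the compact Riemann surface $(\Sigma,F^{\ast}G)$. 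Being nowhere zero, its divisor is trivial; since a nonzero holomorphic quadratic differential has $4g-4$ zeros counted with multiplicity, we must have $g=1$, and hence $\Sigma$ is a torus.

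For the ``furthermore'' statement I would specialize (\ref{e:lenfthofkahler}) and (\ref{e:laplacian}) to $p=1$, $\epsilon=1$, $b=1$, abbreviate $a_j=K+(-1)^{j+1}K^{\bot}$, and compute directly that
\[
\Delta\tan^{-1}C_j=\frac{\Delta C_j}{1+C_j^2}-\frac{2C_j|\nabla C_j|^2}{(1+C_j^2)^2}=-C_{j'}.
\]
Integrating this identity over the closed surface $\Sigma$ and applying the divergence theorem yields $\int_{\Sigma}C_{j'}=0$ for each $j$, hence $\int_{\Sigma}C_1=\int_{\Sigma}C_2=0$. Since each $C_j$ is continuous on the connected surface $\Sigma$ and has vanishing integral, it either vanishes identically or changes sign; in both cases there is a point where $C_j=0$, i.e.\ an $\Omega_j$-Lagrangian point, for each $j=1,2$.

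The main obstacle I anticipate is the Riemannian-case topology: I must verify that $\Theta$ is genuinely holomorphic and nowhere zero, which hinges on the precise signs in (\ref{e:gammanorsec}) and (\ref{e:derivofgamma}), and on the fact that $F^{\ast}G$ Riemannian makes $z=x+iy$ an honest complex isothermal coordinate, so that the classical count $4g-4$ for the zeros of a holomorphic quadratic differential applies. The Lorentzian topology argument and the Lagrangian-point computation are comparatively routine once the correct specializations of the structure equations are in hand.
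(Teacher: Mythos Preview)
Your proof is correct and largely coincides with the paper's. The Lorentzian-case Euler-characteristic argument and the Riemannian-case Hopf-differential argument (holomorphicity via (\ref{e:derivofgamma}) and nonvanishing via (\ref{e:gammanorsec})) are exactly what the paper does.

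The one genuine difference is in establishing the $\Omega_j$-Lagrangian points. The paper argues by extrema: at a maximum $M$ of $C_j$ one has $\nabla C_j(M)=0$, so (\ref{e:lenfthofkahler}) forces $a_j(M)=-C_1(M)C_2(M)$, and then $\Delta C_j(M)\le 0$ combined with (\ref{e:laplacian}) gives $C_{j'}(M)\le 0$; the analogous computation at a minimum gives $C_{j'}(m)\ge 0$, and connectedness yields a zero of $C_{j'}$. You instead compute the identity $\Delta\tan^{-1}C_j=-C_{j'}$ and integrate. Your route is slightly slicker and is in fact the same device the paper employs in the proof of Theorem~\ref{t:secondthe}(2) and in Proposition~\ref{t:sinhgo}(3); the paper simply did not reuse it here. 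The min/max argument, on the other hand, avoids appealing to the global divergence theorem and is closer in spirit to the argument used for the sphere case in Theorem~\ref{t:secondthe}(1). Both are valid and of comparable length.
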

\begin{proof}
Clearly $\Sigma$ is a tori when the induced metric is Lorentzian. Assume that the induced metric is Riemannian. Let $m$ and $M$ be the minimum and the maximum point of $C_j$, respectively. Then, substituting $\epsilon=p=1$ into (\ref{e:lenfthofkahler}) and (\ref{e:laplacian}) and using the fact that $m$ is a minimum point of $C_j$ we get $a_j(m)=-C_1(m)C_2(m)$ and $C_{j'}(m)\geq 0$. Following a similar argument for the point $M$, we obtain that $C_{j'}(M)\leq 0$. Since $\Sigma$ is connected, there exists $x\in\Sigma$ such that $C_{j'}(x)=0$. We now prove that $\Sigma$ is a topological type of a torus. We again consider the Hopf differential $\Theta$ associated to the immersion $F$ which is holomorphic. On the other hand, $\Theta$ is non-vanishing on $\Sigma$ since,
\[
|\left<J_1F_z,J_2F_z\right>|^2=|\gamma_1|^2|\gamma_2|^2=e^{4u}(C^2_1+1)(C^2_2+1)/4,
\]
and thus the genus is one.
\end{proof}

Finally, the following Theorem gives a rigidity result for the function $K\pm K^{\bot}$ for compact minimal Riemannian surfaces in $d{\mathbb S}^2\times d{\mathbb S}^2$.

\begin{Thm}
Let $F:\Sigma\rightarrow d{\mathbb S}^2\times d{\mathbb S}^2$ be a $G$-minimal immersion of a compact surface $\Sigma$ such that the induced metric is Riemmanian. For $m\in\{1,2\}$ assume that we have $K+(-1)^{m}K^{\bot}\geq 0$ or $K+(-1)^{m}K^{\bot}\leq 0$. Then, $K=K^{\bot}=0$ and $F$ is locally the product of geodesics in $d{\mathbb S}^2$. 
\end{Thm}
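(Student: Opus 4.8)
The plan is to reduce the statement to two scalar identities for the K\"ahler functions and then exploit compactness. Throughout I specialize (\ref{e:lenfthofkahler}) and (\ref{e:laplacian}) to $p=1$, $\epsilon=1$, $b=1$, which is precisely the Riemannian setting in $d{\mathbb S}^2\times d{\mathbb S}^2$; note that here $|\gamma_j|^2=\tfrac{e^{2u}}{2}(C_j^2+1)>0$ by (\ref{e:gammanorsec}), so there are no (para-)complex points and the structure equations hold on all of $\Sigma$. Writing $a_j:=K+(-1)^{j+1}K^{\bot}$, the hypothesis says that one of $a_1=K+K^{\bot}$, $a_2=K-K^{\bot}$ has a fixed sign; relabel so that this is $a_j$, and recall from Theorem \ref{t:secondthe2} that $\Sigma$ is a torus. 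In this setting the two cited identities read
\[
|\nabla C_j|^2=(C_j^2+1)\bigl(a_j+C_jC_{j'}\bigr),\qquad \Delta C_j=2C_ja_j-C_{j'}(1-C_j^2),
\]
and a short substitution gives $\Delta\tan^{-1}C_j=-C_{j'}$, whence $\int_\Sigma C_1=\int_\Sigma C_2=0$.

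The crux of the argument --- and the step I expect to demand the most care, although it is only an algebraic simplification --- is to test the right auxiliary function against the Laplacian. I would compute $\Delta g_j$ for $g_j:=\tfrac12\log(1+C_j^2)$, which is smooth since $1+C_j^2>0$. Using $\Delta h(C_j)=h'(C_j)\Delta C_j+h''(C_j)|\nabla C_j|^2$ with $h(t)=\tfrac12\log(1+t^2)$ and inserting the two identities above, the mixed terms $C_jC_{j'}$ cancel and the whole expression collapses to
\[
\Delta g_j=a_j.
\]

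With this identity the conclusion follows quickly. Integrating $\Delta g_j=a_j$ over the closed surface gives $\int_\Sigma a_j=0$; since $a_j$ is continuous and of fixed sign, $a_j\equiv 0$. Then $g_j$ is harmonic on the compact $\Sigma$, hence constant, so $C_j^2$ and therefore $C_j$ is constant, and $\int_\Sigma C_j=0$ forces $C_j\equiv 0$. Substituting $C_j\equiv 0$ into $\Delta\tan^{-1}C_{j'}=-C_j=0$ makes $C_{j'}$ harmonic, hence constant, and $\int_\Sigma C_{j'}=0$ gives $C_{j'}\equiv 0$ as well. Finally, $C_1\equiv C_2\equiv 0$ means $F$ is Lagrangian, so by the remark following Theorem \ref{t:claslagrangiansurfaces} it is locally the product of geodesics in $d{\mathbb S}^2$; and feeding $C_1=C_2=0$ back into the first identity yields $a_1=a_2=0$, that is $K+K^{\bot}=K-K^{\bot}=0$, whence $K=K^{\bot}=0$, completing the proof. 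I note that this route never needs a separate evaluation of $\int_\Sigma K^{\bot}$: the identity $\Delta g_j=a_j$ delivers $\int_\Sigma a_j=0$ directly, and the fixed-sign hypothesis does the rest.
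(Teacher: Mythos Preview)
Your proof is correct and shares the paper's central identity: both compute $\Delta\bigl(\tfrac12\log(1+C_j^2)\bigr)=a_j$, integrate, and use the sign hypothesis to force $a_j\equiv 0$. From that point your endgame diverges slightly from the paper's, and yours is arguably cleaner. The paper, after obtaining $a_j\equiv 0$, separately computes $\Delta C_{m'}^2=4C_1C_2C_{m'}^2\geq 0$, uses $\int_\Sigma C_{m'}^2\Delta C_{m'}^2=-\int_\Sigma|\nabla C_{m'}^2|^2$ to force $C_{m'}$ constant, and then returns to the Frenet data (\ref{e:kahler})--(\ref{e:deroff1}) to deduce $f_{m'}=0$ and $C_m=0$. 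You instead observe that $a_j\equiv 0$ makes $g_j$ itself harmonic on the compact $\Sigma$, hence constant, and then use the integral identities $\int_\Sigma C_1=\int_\Sigma C_2=0$ (from $\Delta\tan^{-1}C_j=-C_{j'}$) to pin down $C_j\equiv 0$ and subsequently $C_{j'}\equiv 0$. This keeps the whole argument at the level of the K\"ahler functions and avoids re-entering the structure equations; the paper's route has the mild advantage of not needing the auxiliary $\tan^{-1}$ identity. Either way, once $C_1=C_2=0$ the Lagrangian classification following Theorem~\ref{t:claslagrangiansurfaces} and the identities $a_1=a_2=0$ finish the job exactly as you indicate.
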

\begin{proof}
Consider a $G$-minimal immersion $F:\Sigma\rightarrow d{\mathbb S}^2\times d{\mathbb S}^2$ such that the surface $\Sigma$ is compact and the induced metric  $F^{\ast}G$ is Riemannian. In this case, we have
\begin{equation}\label{e:simkl1}
|\nabla C_{m'}|^2=(C_{m'}^2+1)(K+(-1)^{m}K^{\bot}+C_1C_2),
\end{equation}
and,
\begin{equation}\label{e:simkl2}
\Delta C_{m'}=2C_{m'}(K+(-1)^{m}K^{\bot})-C_{m}(1-C_{m'}^2).
\end{equation}
A brief computation gives
\[
\Delta\log\sqrt{1+C_{m'}^2}=K+(-1)^{m}K^{\bot},
\]
and thus,
\[
\int_{\Sigma}K+(-1)^{m}K^{\bot}=0.
\]
Assuming, without loss of generality that $K+(-1)^{m}K^{\bot}\geq 0$, we have
\begin{equation}\label{e:esteexpre}
K+(-1)^{m}K^{\bot}= 0.
\end{equation}
Using (\ref{e:esteexpre}), the expressions (\ref{e:simkl1}) and (\ref{e:simkl2}) become
\begin{equation}\label{e:esteexpre1}
|\nabla C_{m'}|^2=(C_{m'}^2+1)C_1C_2,\quad\mbox{and}\quad \Delta C_{m'}=C_m(C_{m'}^2-1),
\end{equation}
and since $|\nabla C_{m'}|^2\geq 0$ we have $C_1C_2\geq 0$.


The relations (\ref{e:esteexpre1}) yield
\[
\Delta C_{m'}^2=4C_1C_2 C_{m'}^2\geq 0.
\]
Thus, $C_{m'}^2\Delta C_{m'}^2\geq 0$ and therefore, the K\"ahler function $C_{m'}$ is constant. Then,
\[
\bar\gamma_{m'} f_{m'}=0,
\]
and since 
\[
|\gamma_{m'}|^2=\frac{e^{2u}(C_{m'}^2+1)}{2},
\]
we have that $f_{m'}=0$. Hence, $\bar\gamma_{m'} C_{m}=0$, and thus $C_{m}=0$. Then $F$ is Lagrangian and therefore it is the product of geodesics in $d{\mathbb S}^2$. 
\end{proof}


\vspace{0.2in}


\begin{thebibliography}{99}

\bibitem{AGK}\textsc{D. Alekseevsky, B. Guilfoyle and W. Klingenberg}, \emph{On the geometry of spaces of oriented geodesics}, Ann. Global Anal. Geom. \textbf{40} (2011),  389--409.


\bibitem{An4}\textsc{H. Anciaux}, \em Space of geodesics of pseudo-Riemannian space forms and normal congruences of hypersurfaces, \em Trans. Amer. Math. Soc. {\bf 366} (2014), 2699--2718.

 



\bibitem{CU}\textsc{I. Castro and F. Urbano}, \emph{Minimal Lagrangian surfaces in ${\mathbb S}^2\times {\mathbb S}^2$}, Comm. Anal. Geom. \textbf{15} (2007), 217--248.





\bibitem{Ge1}\textsc{N. Georgiou}, \emph{On minimal Lagrangian surfaces in the product of Riemannian two manifolds}, T\^ohoku Math. J. \textbf{67}, no. 1 (2015), 137-152.

\bibitem{Ge2}\textsc{N. Georgiou}, \emph{Lagrangian immersions in the product of Lorentzian two manifold}, Geom. Dedicata \textbf{178} (2015), 1--13.

\bibitem{GG}\textsc{N. Georgiou and B. Guilfoyle}, \emph{A characterization of Weingarten surfaces in hyperbolic 3-space}, Abh. Math. Sem. Univ. Hambg. \textbf{80} (2010), 233--253.

\bibitem{GK}\textsc{B. Guilfoyle and W. Klingenberg}, \emph{On Weingarten surfaces in Euclidean and Lorentzian 3-space}, Differential Geom. Appl. \textbf{28} (2010), 454--468.







\bibitem{R} \textsc{H. Rosenberg}, \emph{ Minimal surfaces in $M^2 \times\mathbb R$,}
Illinois J. Math. {\bf 46} (2002), no. 4, 1177--1195.


\bibitem{palmer} \textsc{B. Palmer}, \emph{Hamiltonian minimality and Hamiltonian stability of Gauss maps}, Diff. Geom. Appl. {bf 7} (1997) no. 1, 51–58.

\bibitem{torurb}\textsc{F. Torralbo and F. Urbano}, \emph{Minimal surfaces in ${\mathbb S}^2\times {\mathbb S}^2$},  J. Geom. Anal. {bf 25} (2015), no. 2, 1132--1156. 



\bibitem{urbano2}\textsc{T. Urbano and F. Urbano}, \emph{Surfaces with parallel mean curvature vector in ${\mathbb S}^2\times {\mathbb S}^2$ and ${\mathbb H}^2\times {\mathbb H}^2$}, Trans. Amer. Math. Soc. {\bf 364} (2012), 785--813. 


\end{thebibliography}
\end{document}